\numberwithin{equation}{section}
\newtcolorbox{shadedbox}{
  breakable,
  enhanced jigsaw,
  colback=white,
}
\newcommand{\be}{\begin{equation}}
\newcommand{\ee}{\end{equation}}
\newcommand{\ba}{\begin{array}}
\newcommand{\ea}{\end{array}}
\newcommand{\bea}{\begin{eqnarray*}}
\newcommand{\eea}{\end{eqnarray*}}
\newcommand{\bean}{\begin{eqnarray}}
\newcommand{\eean}{\end{eqnarray}}
\newtheorem{theorem}{Theorem}[section]
\newtheorem{lemma}{Lemma}[section]
\newtheorem{corollary}{Corollary}[section]
\newcommand{\lc}{\mathrel{\raise2pt\hbox{${\mathop<\limits_{\raise1pt\hbox{\mbox{$\sim$}}}}$}}}
\newcommand{\gc}{\mathrel{\raise2pt\hbox{${\mathop>\limits_{\raise1pt\hbox{\mbox{$\sim$}}}}$}}}
\newcommand{\ec}{\mathrel{\raise1pt\hbox{${\mathop=\limits_{\raise2pt\hbox{\mbox{$\sim$}}}}$}}}
\begin{document}

\title{Numerical Energy Dissipation for Time-Fractional Phase-Field Equations}

\author[1]{Chaoyu Quan}
\author[2,4]{Tao Tang}
\author[3,4]{Jiang Yang}
\affil[1]{\small SUSTech International Center for Mathematics, Southern University of Science and Technology, Shenzhen, China (\href{mailto:quancy@sustech.edu.cn}{quancy@sustech.edu.cn}).}
\affil[2]{\small Division of Science and Technology, BNU-HKBU United International College, Zhuhai, Guangdong, China (\href{mailto:ttang@uic.edu.cn}{ttang@uic.edu.cn}).}
\affil[3]{\small Department of Mathematics, Southern University of Science and Technology, Shenzhen, China  (\href{mailto:yangj7@sustech.edu.cn}{yangj7@sustech.edu.cn}).}
\affil[4]{\small Guangdong Provincial Key Laboratory of Computational Science and Material Design, Southern University of Science and Technology, Shenzhen, China.}
\maketitle

\begin{abstract}
The numerical integration of phase-field equations is a delicate task which needs to
recover at the discrete level intrinsic properties of the solution such as energy dissipation and maximum principle.
Although the theory of energy dissipation for classical phase field models is well established, the corresponding theory for
 time-fractional phase-field models is still incomplete.
In this article, we study certain nonlocal-in-time energies using the first-order stabilized semi-implicit L1 scheme. In particular, we will
establish a discrete fractional energy law and a discrete weighted energy law. The extension for a ($2-\alpha$)-order L1 scalar auxiliary variable scheme will be investigated. Moreover, we demonstrate that the energy bound is preserved for the L1 schemes with nonuniform time steps.
Several numerical experiments are carried to verify our theoretical analysis.
\end{abstract}

{\bf Keywords.} time-fractional phased-field equation, Allen--Cahn equations, Cahn--Hilliard equations, Caputo fractional derivative, energy dissipation

{\bf AMS: }
65M06, 65M12, 74A50

\section{Introduction}
\setcounter{equation}{0}

A fractional time derivative arises when the characteristic waiting time diverges, which
models situations involving memory. In recent years, to model memory
effects and subdiffusive regimes in applications such as transport theory, viscoelasticity,
rheology and non-Markovian stochastic processes, there has been an increasing interest
in the study of time-fractional differential equations, i.e., differential equations where the
standard time derivative is replaced by a fractional one, typically a Caputo or a Riemann-Liouville derivative.
It has been reported that the presence of nonlocal operators in time
in the relevant governing equations may change diffusive dynamics significantly, which
can better describe certain fundamental relations between the processes of
interest, see, e.g.,
\cite{Caff17,PMedia2013,DCL2005,GroundW2017,naber2004time}.
It is also noted that an intensive effort has been put into investigations
on time fractional phase-field models. For instance, phase-field framework has been successfully employed to describe
the evolution of structural damage and fatigue \cite{caputo2015}, in which the damage is described by a variable order time fractional
derivative.


Seeking numerical solutions of phase field problems has attracted a lot of recent
attentions. The numerical integration of phase-field equations can be a delicate task: it needs to
recover at the discrete level intrinsic properties of the solution (energy diminishing, maximum principle)
and the presence of small parameter $\varepsilon > 0$ (typically, the interphase length) can generate practical difficulties.
Numerical analysis and computation aiming to handle this task for the classical phase field problems have attracted extensive
attentions, see, e.g., \cite{du2020review,hughes2011,shenxuyang19,tang2020A} and the references therein.
On the other hand, it is natural to extend the relevant discrete level intrinsic properties,
i.e., the maximum principle and energy stability to handle the
time-fractional phase-field equations, see, e.g.,  \cite{du2020time,lisalgado21,WangH2018,Luchko2017}.

This work is concerned with numerical methods for time-fractional phase-field models with the Caputo time-derivative.
The time-fractional phase-field equation can be written in the form of
\begin{subequations}\label{eq:phase_field}
\begin{align}
	 \partial^\alpha_t \phi =  \gamma\,\mathcal G \mu, \label{eq:phase_field1}
\end{align}
where $\alpha \in (0,1)$, $\gamma>0$ is the mobility constant, $\mathcal G$ is a nonpositive operator, and $\partial_t^\alpha$ is the Caputo fractional derivative
defined by
\begin{align}
\partial^\alpha_t \phi(t) \coloneqq \frac{1}{\Gamma(1-\alpha)}\int_0^t \frac{\phi'(s)}{(t-s)^\alpha}\, {\rm d} s, \quad t\in(0,T),
\label{eq:phase_field2}
\end{align}
\end{subequations}
with $\Gamma(\cdot)$ the gamma function.
Choosing different $\mathcal G$ and $\mu$, one derives different phase-field models, such as the Allen--Cahn (AC) model and the Cahn--Hilliard (CH) model. In the AC model  and the CH  model, $\mathcal G $ is taken to be $-1$ and $\Delta$,  respectively, while in both cases $\mu$
takes the same form
\begin{equation}\label{eq:acch}
\mu = -\varepsilon^2 \Delta  \phi + F'(\phi),
\end{equation}
where $\varepsilon>0$ is the interface width parameter and $F$ is a double-well potential functional, commonly chosen as $F(\phi)  = \frac 1 4 \left(1-\phi^2\right)^2$ so that $ F'(\phi) = \phi^3-\phi$. Moreover, the MBE model has two forms, with or without slope selection \cite{xu2006stability}, where
\begin{subequations}\label{eq:mbe}
\begin{align}
\mathcal G = -1, \qquad \mu = \varepsilon^2 \Delta^2  \phi + \nabla\cdot {\mathbf f_{\rm m}} (\nabla \phi), \label{eq:mbe1}
\end{align}
with
\begin{align}
  \begin{array}{r@{}l}
 	\mathbf f_{\rm m} (\nabla \phi) = \left\{
	\begin{aligned}
  & \nabla\phi  - |\nabla\phi|^2 \nabla\phi, && \mbox{with slope selection},\\
 &  \frac{\nabla \phi}{1+\left|\nabla \phi\right|^2}, && \mbox{without slope selection}.
 	\end{aligned}
	\right.
  \end{array} \label{eq:mbe2}
  \end{align}
\end{subequations}
For sake of simplicity, we consider  the periodic boundary condition for above time-fractional phase-field problems.

The classical energy for the standard Allen--Cahn or Cahn--Hilliard equation (i.e., \eqref{eq:acch} with $\alpha=1$) is
\begin{equation}\label{eq:energy}
E(\phi) = \int_\Omega \left(\frac{\varepsilon^2} 2 \left| \nabla \phi \right|^2 + F(\phi) \right) \, {\rm d} x,
\end{equation}
while for the MBE equation \eqref{eq:mbe} is given by
\begin{subequations}\label{eq:energy_mbe}
\begin{align}
E(\phi) = \int_\Omega \left(\frac{\varepsilon^2} 2 \left| \Delta \phi \right|^2 + F_{\rm m}(\nabla\phi)\right) \, {\rm d} x,
\label{eq:energy_mbe1}	
\end{align}
with
\begin{align}
F_{\rm m}(\nabla\phi) = \left\{
  \begin{array}{r@{}l}
	\begin{aligned}
	&  \frac 1 4 \left(1- \left| \nabla\phi \right|^2\right)^2, && \mbox{with slope selection}, \\
	& - \frac 1 2 \ln\left(1 + \left| \nabla\phi \right|^2\right), && \mbox{without slope selection}.
	\end{aligned}
  \end{array}
  \right. \label{eq:energy_mbe2}
  \end{align}
\end{subequations}
Applying the definition (\ref{eq:energy}) with the time-fractional problem \eqref{eq:acch} gives
\begin{equation}
\frac{\rm d}{{\rm d} t} E(\phi)=  \frac 1 \gamma \int_\Omega \partial_t \phi  \left({\mathcal G^{-1}}\partial^\alpha_t \phi  \right) {\rm d}x,
\end{equation}
where $\mathcal G^{-1}$ is the inverse of $\mathcal G$.

It is well known that when $\alpha=1$, the Allen--Cahn and  Cahn--Hilliard  models are gradient flows: the energy associated with these models decays with respect to time, which is the so-called  energy dissipation law.
This property has been used extensively as a nonlinear numerical stability criterion.
However, it is still unknown if such energy dissipation property holds in the general case of $0<\alpha<1$.
In a recent work \cite{TangYZ19}, it is demonstrated that the classical energy \eqref{eq:phase_field} is bounded above by the initial energy:
\begin{equation}
E(t)\le E(0)\qquad \forall\; 0<t<T,
\end{equation}
which is the first work on the energy stability for time-fractional phase-field equations.
Later, Du et al. \cite{du2020time} proposed the fractional energy law based on numerical observations and only proved for the convex energy (not applicable to nonconvex phase-field models), i.e.,
the time-fractional derivative of energy is nonpositive:
\begin{equation} \label{eq:newenergy0}
\partial_t^\alpha E(t)\leq 0\quad \forall\; 0<t<T.
\end{equation}
In \cite{quan2020define}, this fractional energy law was proved for general cases. Still in \cite{quan2020define}, it is shown that in the continuous case a {\em weighted} energy decays with respect to time:
\begin{subequations} \label{eq:newenergy1}
\begin{align}
\partial_t E_\omega(t) \leq 0\qquad \forall\; 0<t<T, \label{eq:newenergy11}
\end{align}
where $E_\omega$ is defined by
\begin{align}
\label{eq:newenergy12}
	E_\omega(t)=  \frac{1}{B(\alpha,1-\alpha)}\int_0^t \frac{E(s)}{s^{1-\alpha}(t-s)^\alpha} \, {\rm d} s.
\end{align}
\end{subequations}
By using the transformation $s=\theta t$, we can obtain
\[
E_\omega(t)=  \frac{1}{B(\alpha,1-\alpha)}\int_0^1 \frac {E(\theta t)} {\theta ^{1-\alpha} (1- \theta)^\alpha} d\theta.
\]
Taking derivative with respect to $t$ and using the transformation $\theta = s/t$ yield 
\begin{equation}\label{eq:decayA}
E'_\omega(t) = \frac{1}{B(\alpha,1-\alpha) t}\int_0^t \frac{s^{\alpha}E'(s)}{(t-s)^\alpha} \, {\rm d} s.
\end{equation}
In other words, the fractional energy law of (\ref{eq:newenergy0}) and the weighted energy dissipation law (\ref{eq:newenergy1}) are all associated with the Caputo fractional operator, i.e., the dissipation of certain time-fractional form for energy:
\begin{equation} \label{explian}
\partial_t^\alpha E(t) \sim \int_0^t \frac{E'(s)}{(t-s)^\alpha}\, {\rm d} s\le 0,  \quad
E'_\omega(t) \sim \int_0^t \frac{s^{\alpha} E'(s)}{(t-s)^\alpha} \, {\rm d} s \le 0.
\end{equation}

This paper is concerned with the numerical implementation of the energy stability properties (\ref{eq:newenergy0}) and (\ref{eq:newenergy1}) by using the first-order stabilized L1 scheme. From a numerical point view, the essential step to study the fractional PDE is to approximate the time-fractional derivative operator. The classical L1 method is naturally derived from the approximation of the fractional integral as a Riemann sum and long known to be consistent (see, e.g., \cite{lisalgado21}).
Moreover, L1 approximation scheme stands out by being able to preserve at the
discrete level certain desirable features of the original PDEs, such as maximum principle \cite{giga2019discrete,jinlizhou18,TangYZ19}
and energy stability \cite{du2020time,huangstyne20}.
Our analysis will also be relevant to the convex-splitting schemes \cite{chen2012linear,eyre1998unconditionally,wang2010unconditionally}, the stabilization schemes  \cite{shen2010numerical,xu2006stability}, and the scalar auxiliary variable (SAV) schemes \cite{shen2018scalar}.
In particular, we will establish the energy boundedness and the fractional energy law for a ($2-\alpha$)-order L1-SAV scheme with uniform time steps. The energy boundedness under non-uniform time step will be also investigated.
All proofs are based on a special Cholesky decomposition proposed recently by us in \cite{quan2020define}, which seems very useful for studying numerical approximations of time-fractional phase-field equations.

We point out a very recent comprehensive study for the theory of fractional gradient flows by Li and Salgado \cite{lisalgado21} which
introduced the notion of energy solutions. The authors provide existence, uniqueness and certain regularizing effects due to the
Caputo derivative. The time-fractional phase-field models fit well with the class of the fractional gradient flows.

The paper is organized as follows.
Section \ref{sect2} introduces the L1 approximation of the time-fractional operator and a semi-implicit stabilization technique.
We then establish the fractional energy law (\ref{eq:newenergy0}) and the weighted energy dissipation law
(\ref{eq:newenergy1}).
In Section \ref{sect3}, we propose a ($2-\alpha$)-order L1-SAV scheme, and establish the corresponding energy boundedness and the fractional energy law.
In Section \ref{sect4}, the first-order and ($2-\alpha$)-order L1 schemes are investigated with nonuniform time-steps. Section \ref{sect5} presents several numerical examples to verify our theoretical results.
Some concluding remarks are given in the final section.

\section{First order stabilized L1 scheme} \label{sect2}
\setcounter{equation}{0}

We first introduce the discretization of the time fractional derivative.
Let $\Delta t = T /N$ be the time step size and $t_n = n \Delta t$, $0\leq n \leq N$.
The L1 approximation of the Caputo time-fractional derivative (\ref{eq:phase_field2}) is given by:
\begin{subequations}
\label{eq:partial_t_alpha}
\begin{align}
  \bar\partial_n^\alpha \phi \coloneqq \sum_{j=1}^{n} b_{n-j} \overline \partial_j \phi,\quad 1\leq n \leq N,
\label{eq:partial_t_alpha1}
\end{align}
where $\overline \partial_n^\alpha$ denotes the discrete fractional derivative at $t_n$,
\begin{align}
\label{eq:partial_t_alpha2}
b_j = \frac{\Delta t^{1-\alpha}}{ \Gamma(2-\alpha)}\left[ (j+1)^{1-\alpha} - j^{1-\alpha} \right],
\quad j\geq 0,
\end{align}
and $\overline \partial_k $ denotes the discrete first-order derivative at $t_k$ as follows:
\begin{align}
 \overline \partial_k \phi \coloneqq \frac{ \phi^{k} - \phi^{k-1} }{\Delta t}. \label{eq:partial_t_alpha3}
\end{align}
\end{subequations}
One can refer to \cite{sun2006fully} for the analysis of the  L1 approximation, where the truncation error of order $2-\alpha$
is derived. A useful reformulation of \eqref{eq:partial_t_alpha1} is
\begin{equation}
\bar\partial_n^\alpha \phi =  \frac{1}{\Delta t}\left[ b_0 \phi^{n} - \sum_{j=1}^{n-1} (b_{j-1}-b_{j})  \phi^{n-j}  -b_{n-1}\phi^0\right],\quad 1\leq n \leq N,
\end{equation}
where the following relationship holds:
\begin{equation}
b_{j-1}-b_{j}>0,\quad b_{n-1} >0,\quad \sum_{j=1}^{n-1} (b_{j-1}-b_{j}) + b_{n-1} = b_0.
\end{equation}
We further decompose the energy by the quadratic-nonquadratic splitting as follows:
\begin{equation}\label{eq:quadratic_splitting}
E(\phi) = \frac 1 2 \left<\phi,\mathcal L \phi\right> + E_1(\phi),
\end{equation}
where $\left<\cdot,\cdot\right>$ denotes the $\ell_2$ inner product over $\Omega$, $\mathcal L$ is some symmetric nonnegative linear operator ($\mathcal L = - \varepsilon^2 \Delta$ for the AC/CH model and $\mathcal L = \varepsilon^2\Delta^2$ for the MBE model), and $E_1$ is the remaining nonquadratic term.
The stabilized L1 scheme for \eqref{eq:phase_field} is written as
\begin{equation}\label{eq:L1STA}
\bar\partial_{n+1}^\alpha \phi = \gamma \,\mathcal G \left(\mathcal L \phi^{n+1} +  \delta_\phi E_{1}(\phi^n) + \widetilde{\mathcal L} \left(\phi^{n+1}-\phi^n\right) \right),
\end{equation}
where $\widetilde{\mathcal L}$ is some linear operator in the following form (see, e.g.,  \cite{TangYZ19})
\begin{equation}
\widetilde{\mathcal L} = \left\{
  \begin{array}{l@{}l}
	\begin{aligned}
	& \; S && \mbox{AC or CH model,}\\
	& -S\Delta  && \mbox{MBE without slope selection}
	 \end{aligned}
  \end{array}
  \right.
\end{equation}
with some positive constant $S$ satisfying
\begin{equation}
S \geq \left\{
  \begin{array}{r@{}l}
	\begin{aligned}
	& 2 && \mbox{AC model,}\\
	& \frac L 2 && \mbox{CH model,}\\
	& \frac 1 {16} && \mbox{MBE without slope selection.}
	 \end{aligned}
  \end{array}
  \right.
\end{equation}
Here, for the CH model, a truncation technique as in \cite{caffarelli95,shen2010numerical} is used so that
$$\max_{\phi\in \mathbb R} \left| \left(\delta_\phi E_1(\phi)\right)'\right|\leq L$$
 for some constant $L>0$.


In the classical case of $\alpha = 1$, the following inequality holds for the scheme \eqref{eq:L1STA}:
\begin{equation}
E^{n+1} - E^n \leq \frac 1 {\gamma\Delta t} \left<  \mathcal G^{-1} (\phi^{n+1} - \phi^n),  \phi^{n+1} - \phi^n \right> \leq 0,
\end{equation}
since $\mathcal G^{-1}$ is nonpositive definite.
Similarly, in the general case of $0<\alpha <1$, one can obtain the following inequality characterizing the energy difference between two neighboring time steps.

\vskip .25cm
\begin{lemma}\label{lem:diffene}
Assume that the initial data satisfies $\Vert \phi^0 \Vert _\infty \le 1$. The energy of the stabilized L1 scheme \eqref{eq:L1STA} satisfies the following property:
\begin{equation}\label{ineq:diffene}
E^{n+1} - E^n \leq \frac 1 \gamma \left<  \mathcal G^{-1} \overline \partial_{n+1}^\alpha \phi,  \phi^{n+1} - \phi^n \right>, \quad 0\leq n\leq N-1,
\end{equation}
where $E^n = E(\phi^n)$ denotes the classical energy at $t_n$.
\end{lemma}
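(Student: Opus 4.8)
\medskip
\noindent\textbf{Proof strategy.}
The plan is to reduce \eqref{ineq:diffene} to the familiar energy inequality of a stabilized scheme in the classical case, treating the fractional term $\bar\partial_{n+1}^\alpha\phi$ as an inert right-hand side. Applying $\mathcal G^{-1}$ to \eqref{eq:L1STA} and pairing with $\phi^{n+1}-\phi^n$ in the $\ell_2$ inner product gives
\begin{equation*}
\frac{1}{\gamma}\left\langle \mathcal G^{-1}\bar\partial_{n+1}^\alpha\phi,\ \phi^{n+1}-\phi^n\right\rangle
= \left\langle \mathcal L\phi^{n+1},\ \phi^{n+1}-\phi^n\right\rangle
+ \left\langle \delta_\phi E_1(\phi^n),\ \phi^{n+1}-\phi^n\right\rangle
+ \left\langle \widetilde{\mathcal L}(\phi^{n+1}-\phi^n),\ \phi^{n+1}-\phi^n\right\rangle .
\end{equation*}
Hence it suffices to prove that $E^{n+1}-E^n$ is bounded above by the right-hand side of this identity; note that from here on neither $\alpha$ nor the memory kernel plays any role.

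Next I split the energy increment according to \eqref{eq:quadratic_splitting}. For the quadratic part, the symmetry of $\mathcal L$ together with the elementary identity $\tfrac12\langle a,\mathcal L a\rangle-\tfrac12\langle b,\mathcal L b\rangle=\langle \mathcal L a,\,a-b\rangle-\tfrac12\langle \mathcal L(a-b),\,a-b\rangle$, applied with $a=\phi^{n+1}$ and $b=\phi^n$, yields
\begin{equation*}
\tfrac12\langle \phi^{n+1},\mathcal L\phi^{n+1}\rangle-\tfrac12\langle \phi^{n},\mathcal L\phi^{n}\rangle
=\left\langle \mathcal L\phi^{n+1},\phi^{n+1}-\phi^n\right\rangle-\tfrac12\left\langle \mathcal L(\phi^{n+1}-\phi^n),\phi^{n+1}-\phi^n\right\rangle ,
\end{equation*}
which reproduces the first term on the right-hand side above and leaves a remainder $-\tfrac12\langle \mathcal L(\phi^{n+1}-\phi^n),\phi^{n+1}-\phi^n\rangle$, nonpositive for the AC/CH models and at least controllable for MBE.

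For the nonquadratic part I use a second-order Taylor expansion with integral remainder,
\begin{equation*}
E_1(\phi^{n+1})-E_1(\phi^n)-\left\langle \delta_\phi E_1(\phi^n),\phi^{n+1}-\phi^n\right\rangle
=\tfrac12\left\langle \big(\delta_\phi E_1\big)'(\xi)\,(\phi^{n+1}-\phi^n),\ \phi^{n+1}-\phi^n\right\rangle
\end{equation*}
for a suitable intermediate state $\xi$ (a function of $\phi$ for AC/CH, of $\nabla\phi$ for MBE). Then I invoke a uniform bound on $(\delta_\phi E_1)'$: for AC the discrete maximum principle, available precisely because $\Vert\phi^0\Vert_\infty\le 1$, gives $\Vert\phi^k\Vert_\infty\le 1$ for all $k$, whence $|(\delta_\phi E_1)'|=|F''|\le 2$; for CH the truncation of $E_1$ gives $|(\delta_\phi E_1)'|\le L$; for MBE without slope selection an analogous estimate holds. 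Combining this with the leftover quadratic remainder, the prescribed lower bounds on the stabilization constant $S$ in the definition of $\widetilde{\mathcal L}$ are exactly what guarantee
\begin{equation*}
\tfrac12\left\langle \big(\delta_\phi E_1\big)'(\xi)\,(\phi^{n+1}-\phi^n),\phi^{n+1}-\phi^n\right\rangle
\le \left\langle \widetilde{\mathcal L}(\phi^{n+1}-\phi^n),\phi^{n+1}-\phi^n\right\rangle+\tfrac12\left\langle \mathcal L(\phi^{n+1}-\phi^n),\phi^{n+1}-\phi^n\right\rangle .
\end{equation*}
Adding the quadratic and nonquadratic estimates produces $E^{n+1}-E^n\le(\text{right-hand side})$, which is \eqref{ineq:diffene}.

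The only genuinely delicate point is the uniform control of $(\delta_\phi E_1)'$ along the discrete trajectory: for AC it relies on the discrete maximum principle for the scheme \eqref{eq:L1STA} (so the hypothesis $\Vert\phi^0\Vert_\infty\le1$ is used here and nowhere else), and for CH it relies on the truncation construction; verifying that the stated thresholds for $S$ actually close the inequality in each model is the remaining bookkeeping. Everything else is model-independent and coincides with the classical $\alpha=1$ argument, which is exactly why the fractional operator can be carried through untouched.
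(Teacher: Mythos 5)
Your proposal is correct and follows essentially the same route as the paper: reduce \eqref{ineq:diffene} to the identity obtained by pairing $\mathcal G^{-1}$ applied to \eqref{eq:L1STA} with $\phi^{n+1}-\phi^n$, use the polarization identity for the quadratic part, and absorb the nonlinear increment via the maximum principle and the stabilization term. The only differences are cosmetic: the paper treats the AC case by proving the discrete maximum bound explicitly (by induction, using $S\ge 2$ and the resolvent estimate) and then invokes the pointwise inequality $(b^3-b)(a-b)+(a-b)^2\ge\tfrac14\left[(a^2-1)^2-(b^2-1)^2\right]$ for $a,b\in[-1,1]$, where you instead cite the maximum principle and use a Taylor remainder bounded by $\sup|F''|\le 2$ --- both close the estimate under the stated thresholds for $S$.
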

\begin{proof}
The proof is quite similar to the classical case of $\alpha = 1$ given by \cite{TangYZ19}.
Here, we only prove the specific case of AC model under the constraint $S\geq 2$.  In this case, \eqref{eq:L1STA} can be rewritten as
\begin{eqnarray}
&& \left(\frac{b_0}{\Delta t} + \gamma S -\gamma \varepsilon^2 \Delta\right) \phi^{n+1} \nonumber \\
&=& \gamma ( S+1) \phi^n - \gamma(\phi^n)^3 + \sum_{j=0}^{n-1} \frac{(b_j-b_{j+1})}{\Delta t}  \phi^{n-j}  + \frac{b_n}{\Delta t}\phi^0.
\end{eqnarray}
Since $S\geq 2$, it is not difficult to verify that if $\|\phi^n\|_\infty\leq 1$ then $\|( S+1) \phi^n - (\phi^n)^3\|_\infty \leq  S$.
Further, it is known (see, e.g., \cite{tang2016implicit}) that
\begin{equation}
\left\| \left(\frac{b_0}{\Delta t} + \gamma S -\gamma \varepsilon^2 \Delta\right)^{-1}\right\|_\infty \leq  \left(\frac{b_0}{\Delta t} + \gamma S\right)^{-1},
\end{equation}
which yields by induction that if $\left \| \phi^0 \right \|_\infty \leq 1$ then $\left \| \phi^{n} \right \|_\infty \leq 1$, i.e., the maximum bound is preserved. As a consequence, we further obtain
  \begin{eqnarray}
	&&\frac 1 \gamma \left<  \mathcal G^{-1} \overline \partial_{n+1}^\alpha \phi,  \phi^{n+1} - \phi^n \right>  \nonumber \\
	&=& \left< - \varepsilon^2 \Delta \phi^{n+1} +  (\phi^n)^3 - \phi^n +  S  \left(\phi^{n+1}-\phi^n\right), { \phi^{n+1} - \phi^n } \right> \nonumber \\
	&=& \frac {\varepsilon^2}{2} \left( \left\|\nabla \phi^{n+1}\right\|^2 - \left\|\nabla \phi^n\right\|^2 + \left\|\nabla \phi^{n+1} - \nabla \phi^n\right\|^2 \right) \nonumber \\
	&&\quad +   \left< (\phi^n)^3 - \phi^n +  S  \left(\phi^{n+1}-\phi^n\right) , \phi^{n+1} - \phi^n \right> \nonumber\\
	& \geq& \frac {\varepsilon^2}{2} \left( \left\|\nabla \phi^{n+1}\right\|^2 - \left\|\nabla \phi^n\right\|^2  \right)
	 +  \frac{1}{4} \left( \left\| (\phi^{n+1})^2 - 1 \right\|^2  - \left\| (\phi^n)^2 - 1 \right\|^2  \right) \nonumber\\
	&=&  E^{n+1} - E^n,
 \end{eqnarray}
where the following inequality is used:
\begin{equation*}
(b^3-b)(a-b) + (a-b)^2 \geq \frac 1 4 \left[ (a^2-1)^2 - (b^2-1)^2 \right]\quad \forall\; a,b\in [-1,1].
\end{equation*}
This completes the proof of the lemma.
\end{proof}

\vskip .3cm
We point out that when $\alpha = 1$ \eqref{ineq:diffene} indicates that the discrete energy $E^n$ decays w.r.t. $n$.
When $0<\alpha<1$, \eqref{ineq:diffene} will be useful in our later analysis for the fractional energy law and the weighted energy dissipation law.

Note that the first-order convex-splitting scheme for \eqref{eq:phase_field} can be written as
\begin{equation}
\bar\partial_{n+1}^\alpha \phi = \gamma \, \mathcal G \left(\delta_\phi E_{\rm c}(\phi^{n+1})-\delta_\phi E_{\rm e}(\phi^n)\right),
\end{equation}
where $\bar \partial_n^\alpha$ is given by \eqref{eq:partial_t_alpha1}, $E_{\rm c}$ and $E_{\rm e}$ are two convex functionals w.r.t. $\phi$ such that $E(\phi) = E_{\rm c}(\phi) - E_{\rm e}(\phi)$.
The first-order SAV scheme for \eqref{eq:phase_field} is given by (see, e.g., \cite{shenxuyang19}):
\begin{subequations} \label{2e17}
\begin{align}
 & \bar \partial_{n+1}^\alpha \phi  =  \gamma \, {\mathcal G} \mu^{n+1},\label{2e171}\\
 &\mu^{n+1} = \mathcal L \phi^{n+1} + \frac{r^{n+1}}{\sqrt{E_1(\phi^n)+C_0}} \delta_\phi E_{1}(\phi^n), \label{2e17b}\\
 & r^{n+1}-r^n = \frac{1}{2\sqrt{E_1(\phi^n)+C_0}} \left< \delta_\phi E_{1}(\phi^n),\phi^{n+1}-\phi^n\right>, \label{2e17c}
 	\end{align}
\end{subequations}
where $C_0>0$ is some positive constant.
It can be verified without difficulty that both schemes still satisfy \eqref{ineq:diffene}, implying that the fractional and weighted energy laws in subsections \ref{subsect2.1} and \ref{subsect2.2} also hold.

\vskip .3cm
Before studying the discrete energy laws, we first recall a special Cholesky decomposition result which provides a new way to determine positive definiteness of a symmetric positive matrix.

\vskip .3cm
\begin{lemma}[A special Cholesky decomposition, \cite{quan2020define}]\label{lem:cholesky}
Given an arbitrary symmetric matrix $\mathbf S$ of size $N\times N$ with positive elements. If $\mathbf S$ satisfies the following properties:
\begin{itemize}\label{item:1}
\item[(P1)] $\forall\; 1\leq j < i \leq N$, $\mathbf S_{i-1,j}\geq  \mathbf S_{i, j}$;
\item[(P2)] $\forall\; 1 < j \leq i \leq N$, $\mathbf S_{i, j-1}<  \mathbf S _{i, j}$;
\item[(P3)] $\forall\; 1< j < i \leq N$, $\mathbf S_{i-1, j-1} -\mathbf S_{i, j-1}\leq \mathbf S_{i-1, j} -\mathbf S_{i, j}$,
\end{itemize}
then ${\mathbf S}$ is a positive definite matrix. Moreover, ${\mathbf S}$ has a Cholesky decomposition
$\mathbf S = {\mathbf L} {\mathbf L}^{\rm T}$,
where ${\mathbf L}$ is a lower triangular matrix satisfying
\begin{itemize}
\item[(Q1)] $\forall 1\leq j\leq i\leq N$, $\left[ \mathbf L \right]_{ij} > 0$;
\item[(Q2)] $\forall 1\leq j < i \leq N$, $\left[ \mathbf L \right]_{i-1,j}\geq \left[ \mathbf L \right]_{i, j}$.
\end{itemize}
\end{lemma}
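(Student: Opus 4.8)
The plan is to argue by induction on the matrix size $N$, peeling off one column of the Cholesky factor at a time from the top-left corner and showing that the structural hypotheses (P1)--(P3) are inherited by the Schur complement at each step. The base case $N=1$ is trivial: $\mathbf S=(\mathbf S_{1,1})$ with $\mathbf S_{1,1}>0$, and $\mathbf L=(\sqrt{\mathbf S_{1,1}})$ works. For the inductive step I would partition
\[
\mathbf S=\begin{pmatrix}\mathbf S_{1,1}&\mathbf b^{\rm T}\\ \mathbf b&\mathbf S'\end{pmatrix},\qquad \mathbf b=(\mathbf S_{2,1},\dots,\mathbf S_{N,1})^{\rm T},
\]
and pass to the Schur complement $\widehat{\mathbf S}=\mathbf S'-\mathbf b\mathbf b^{\rm T}/\mathbf S_{1,1}$, i.e., $\widehat{\mathbf S}_{i,j}=\mathbf S_{i+1,j+1}-\mathbf S_{i+1,1}\mathbf S_{j+1,1}/\mathbf S_{1,1}$ for $1\le i,j\le N-1$. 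Once $\widehat{\mathbf S}$ is shown to satisfy the hypotheses of the lemma, the induction hypothesis provides a Cholesky factor $\widehat{\mathbf L}$ with (Q1)--(Q2), and one reconstructs
\[
\mathbf L=\begin{pmatrix}\sqrt{\mathbf S_{1,1}}&\mathbf 0^{\rm T}\\ \mathbf b/\sqrt{\mathbf S_{1,1}}&\widehat{\mathbf L}\end{pmatrix},
\]
which satisfies $\mathbf L\mathbf L^{\rm T}=\mathbf S$; since $\mathbf L$ is lower triangular with a positive diagonal it is invertible, so $\mathbf S$ is positive definite.

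There are then two things to check. First, $\widehat{\mathbf S}$ has positive entries, i.e., $\mathbf S_{1,1}\mathbf S_{i+1,j+1}>\mathbf S_{i+1,1}\mathbf S_{j+1,1}$; taking $i\ge j$ (the other case by symmetry), this follows from a short induction on $j$ using (P2) to lower the second column index one step at a time and (P1) to compare column-one entries. Second, $\widehat{\mathbf S}$ inherits (P1)--(P3). Property (P2) for $\widehat{\mathbf S}$ is immediate: its left-hand side $\mathbf S_{i+1,j+1}-\mathbf S_{i+1,j}$ is positive by (P2) for $\mathbf S$, while the correction $\mathbf S_{i+1,1}(\mathbf S_{j+1,1}-\mathbf S_{j,1})/\mathbf S_{1,1}$ is nonpositive by (P1). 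For (P1) and (P3) the clean device is the column increments
\[
g_i^{(c)}\coloneqq \mathbf S_{i,c}-\mathbf S_{i+1,c}\ \ge\ 0\qquad(1\le c\le i),
\]
which are nonnegative by (P1) and --- crucially --- nondecreasing in $c$ on $1\le c\le i$, this monotonicity being a direct restatement of (P3) for $\mathbf S$. Writing (P1) and (P3) for $\widehat{\mathbf S}$ out and cancelling $\mathbf S_{1,1}$, each collapses to an inequality of the form
\[
g_i^{(j+1)}-g_i^{(j')}\ \ge\ g_i^{(1)}\,\frac{\mathbf S_{j+1,1}-\mathbf S_{j',1}}{\mathbf S_{1,1}},\qquad j'\in\{1,j\},
\]
whose right-hand side is nonpositive (column one of $\mathbf S$ is nonincreasing by (P1)) while the left-hand side is nonnegative (by monotonicity of $g_i^{(c)}$), so both hold. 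Finally, $\mathbf L$ inherits (Q1) at once, and for (Q2) the first column reduces to (P1) with $j=1$ (the case $i=2$ reading $\mathbf S_{1,1}\ge\mathbf S_{2,1}$), while the remaining columns reduce to (Q2) for $\widehat{\mathbf L}$ after shifting indices by one.

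The step I expect to be the main obstacle is the inheritance of (P1) by $\widehat{\mathbf S}$: there the correction $(\mathbf S_{i,1}-\mathbf S_{i+1,1})\mathbf S_{j+1,1}/\mathbf S_{1,1}$ is itself nonnegative, so it cannot simply be discarded, and one genuinely needs both the monotonicity $g_i^{(j+1)}\ge g_i^{(1)}$ and the normalization $\mathbf S_{j+1,1}/\mathbf S_{1,1}\le 1$ (the latter again from (P1)). Beyond that, the only real care required is bookkeeping of the index ranges in (P1)--(P3) --- in particular, checking that the shifted column index $j+1$ appearing in $\widehat{\mathbf S}$ never exceeds $i$, so that the monotonicity of $g_i^{(c)}$ can be invoked precisely where it is needed.
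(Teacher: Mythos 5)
The paper does not actually prove this lemma: it is quoted verbatim from \cite{quan2020define} and used as a black box, so there is no in-paper argument to compare yours against. Judged on its own, your Schur-complement induction is correct and complete in all essentials. The base case, the reconstruction $\mathbf L\mathbf L^{\rm T}=\mathbf S$, and the deduction of positive definiteness from the positive diagonal of $\mathbf L$ are all standard and fine. The substantive checks all go through: positivity of $\widehat{\mathbf S}_{i,j}$ follows from $\mathbf S_{i+1,j+1}>\mathbf S_{i+1,1}$ (row monotonicity (P2)) together with $\mathbf S_{1,1}\geq\mathbf S_{j+1,1}$ (column monotonicity (P1)); inheritance of (P2) is immediate since the Schur correction $-\mathbf S_{i+1,1}\left(\mathbf S_{j+1,1}-\mathbf S_{j,1}\right)/\mathbf S_{1,1}$ is nonnegative; and your reduction of (P1) and (P3) for $\widehat{\mathbf S}$ to the single inequality $g_i^{(j+1)}-g_i^{(j')}\geq g_i^{(1)}\bigl(\mathbf S_{j+1,1}-\mathbf S_{j',1}\bigr)/\mathbf S_{1,1}$, with left side nonnegative by the (P3)-monotonicity of $c\mapsto g_i^{(c)}$ and right side nonpositive by (P1), is exactly right. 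You also correctly identify the two delicate points: the (P1) case genuinely needs both $g_i^{(j+1)}\geq g_i^{(1)}$ and $\mathbf S_{j+1,1}/\mathbf S_{1,1}\leq 1$, and the monotonicity of $g_i^{(c)}$ is only available for $c\leq i$, which is precisely the range the shifted index $j+1$ occupies after passing to $\widehat{\mathbf S}$. The only cosmetic remark is that (Q2) is stated in the lemma with a non-strict inequality while (P1) gives you exactly that, so the first column of $\mathbf L$ works out with no slack to spare; if you write this up in full, make the strict/non-strict bookkeeping explicit in the (P2)-inheritance step, where strictness of the original (P2) is what carries the strictness through the induction.
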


\vskip .3cm
 Note that the property (P1) indicates that the matrix $\mathbf S$ is column decreasing, while (P2) means that $\mathbf S$ is row increasing.
The property (P3) is related to the second-order cross partial derivative from the continuous point of view, see \cite{quan2020define} for more details.

\subsection{Fractional energy law}\label{subsect2.1}
We state and prove the first discrete energy law, called the fractional energy law, proposed by Du et al. \cite{du2020time}.
Our proof is based on the following lemma.

\vskip .25cm
\begin{lemma}\label{lem:L1prop1}
For any function $u$ defined on $\Omega \times [0,T]$, the following inequality holds:
\begin{equation}
\sum_{k=1}^{n} b_{n-k}  \left< \bar\partial_{k}^\alpha u, \bar\partial_{k} u\right> \geq 0 \quad \forall n\geq 1.
\end{equation}
\end{lemma}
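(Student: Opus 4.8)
The plan is to read the left–hand side as a quadratic form in the discrete velocities $v^k:=\bar\partial_k u$ and to prove its nonnegativity by producing a Cholesky-type factorisation through Lemma~\ref{lem:cholesky}. Expanding the inner L1 sum,
\[
\sum_{k=1}^{n} b_{n-k}\bigl\langle \bar\partial_k^\alpha u,\bar\partial_k u\bigr\rangle
=\sum_{1\le j\le k\le n} b_{n-k}\,b_{k-j}\,\langle v^j,v^k\rangle
=\sum_{i,j=1}^{n}\mathbf S_{ij}\,\langle v^i,v^j\rangle ,
\]
where $\mathbf S$ is the real symmetric matrix with $\mathbf S_{ii}=b_0\,b_{n-i}$ and $\mathbf S_{ij}=\tfrac12\,b_{|i-j|}\,b_{n-\max(i,j)}$ for $i\ne j$. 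Since the Gram matrix $G:=[\langle v^i,v^j\rangle]$ is positive semidefinite and $\sum_{i,j}\mathbf S_{ij}G_{ij}=\operatorname{tr}(\mathbf S G)$, it suffices to show that $\mathbf S$ (or any matrix carrying the same quadratic form) is positive semidefinite; in fact, once a lower-triangular $\mathbf L$ with $\mathbf S=\mathbf L\mathbf L^{\mathrm T}$ is in hand, the sum equals $\sum_{\ell}\bigl\|\sum_i \mathbf L_{i\ell}\,v^i\bigr\|^2\ge 0$ directly.

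First I would record the structural properties of the L1 weights on which everything hinges. Writing $b_j=\frac{1}{\Gamma(1-\alpha)}\int_{t_j}^{t_{j+1}}s^{-\alpha}\,{\rm d}s$, the sequence $(b_j)$ inherits from $s\mapsto s^{-\alpha}$ that it is positive, strictly decreasing, convex (so the decrements $\delta_j:=b_j-b_{j+1}$ are positive and decreasing), and log-convex, log-convexity being preserved under the integral average; the decrement sequence $(\delta_j)$ shares these properties (this time via $s\mapsto s^{-\alpha-1}$). With these facts I would then check that $\mathbf S$ — or, since the diagonal and off-diagonal prefactors of $\mathbf S$ do not match, a convenient splitting $\mathbf S=\mathbf S^{\flat}+\mathbf S^{\sharp}$ with $\mathbf S^{\sharp}$ manifestly positive semidefinite and $\mathbf S^{\flat}$ having positive entries — satisfies the column-decreasing hypothesis (P1), the row-increasing hypothesis (P2), and the discrete cross-difference hypothesis (P3) of Lemma~\ref{lem:cholesky}. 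The lemma then delivers the Cholesky factor $\mathbf L$ with $\mathbf L_{i\ell}>0$, and the conclusion follows as above.

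The hard part will be precisely this verification, and within it property (P3): because the entries are products $b_\cdot\, b_\cdot$ evaluated at indices that march in opposite directions along rows, columns and diagonals, (P3) unfolds into inequalities between second-order differences of such products — typically comparisons of the shape $b_p\,\delta_{r+1}\le b_{p+1}\,\delta_r$ — which is exactly where log-convexity of $(b_j)$ and $(\delta_j)$ enters, and the index bookkeeping is delicate. As a fallback I would try summation by parts twice in the outer index $k$, exploiting the convexity of the weight sequence $(b_{n-k})_{k}$ together with the classical nonnegativity of the unweighted partial sums $\sum_{k=1}^{m}\langle\bar\partial_k^\alpha u,\bar\partial_k u\rangle$ (itself Lemma~\ref{lem:cholesky} applied to the symmetric part of the lower-triangular L1 convolution matrix); this collapses the problem to controlling a single boundary term, whose sign, however, cannot be read off from nonnegativity alone and would require the finer structure of those partial sums.
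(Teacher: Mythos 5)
Your setup coincides with the paper's: the left-hand side is the quadratic form of $\mathbf B=\mathrm{diag}(b_{n-1},\dots,b_0)\,T$, where $T$ is the lower-triangular Toeplitz matrix of the L1 weights, and it suffices to show that the symmetric part of $\mathbf B$ is positive semidefinite, with Lemma~\ref{lem:cholesky} as the intended tool. The gap lies precisely in the step you defer as ``the hard part'': you propose to verify (P1)--(P3) for $\tfrac12(\mathbf B+\mathbf B^{\rm T})$ itself, or for a splitting that only repairs the mismatch between diagonal and off-diagonal prefactors. That matrix does not satisfy (P1). Its strictly lower-triangular entries are $b_{n-i}\,b_{i-j}$, and along a fixed column $j$ the two factors move in opposite directions as $i$ increases ($b_{n-i}$ increases while $b_{i-j}$ decreases), so the product is not monotone. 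Concretely, for $n=3$ and $\alpha=1/2$ (dropping the common constant, $b_0=1$, $b_1=\sqrt2-1$, $b_2=\sqrt3-\sqrt2$) the first column below the diagonal reads $b_1^2\approx 0.172$ followed by $b_0b_2\approx 0.318$, which \emph{increases} down the column; subtracting a positive semidefinite diagonal piece leaves these off-diagonal entries untouched, so no splitting of the kind you describe can restore (P1). Log-convexity of $(b_j)$, which you correctly record, only compares products $b_pb_q$ with $p+q$ fixed through $|p-q|$, and here $|p-q|$ can move either way along a column, so it does not decide the comparison. Your fallback (double summation by parts) is, as you concede, inconclusive because of the uncontrolled boundary term.

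The missing idea is the paper's congruence by the \emph{scaled anti-diagonal} matrix $P$ of \eqref{eq:matP}: setting $\mathbf S=P(\mathbf B+\mathbf B^{\rm T})P^{\rm T}$ simultaneously reverses the index order (so the troublesome factor $b_{n-\max(i,j)}$ is normalized away) and divides by $b_{i-1}$, producing $\mathbf S_{ij}=b_{i-j}b_{i-1}^{-1}$ for $i>j$. For this matrix (P1) and (P2) are straightforward, and (P3) reduces to the monotonicity of an explicit function $f$, established via the auxiliary function $p(y)=y^\alpha\bigl((y+1)^{1-\alpha}-y^{1-\alpha}\bigr)$; since congruence preserves positive definiteness, the claim follows. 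As written, your proposal identifies the right framework but does not contain a proof, and the specific route you commit to fails already at (P1).
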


\begin{proof}
It is sufficient to prove that
\begin{equation}
\mathbf B =
\left[\begin{array}{ccccc}b_{n-1} &  &  &  &  \\ & b_{n-2} &  &  &  \\ &  & \ddots &  &  \\ &  &  & b_1 &  \\ &  &  &  & b_0\end{array}\right]
\left[\begin{array}{ccccc}b_0 &  &  &  &  \\b_1 & b_0 &  &  &  \\\vdots & \vdots & \ddots &  &  \\b_{n-2} & b_{n-3} & \cdots & b_0 &  \\b_{n-1} & b_{n-2} & \cdots & b_1 & b_0\end{array}\right]
\end{equation}
is positive definite, which is equivalent to prove that $\mathbf B + \mathbf B^{\rm T}$ is positive definite.
To do this, we make a conjugate transformation of $\mathbf B + \mathbf B^{\rm T}$ as follows:
\begin{equation}
 {\mathbf S} = P \left({\mathbf B}+{\mathbf B}^{\rm T}\right) P^{\rm T},
\end{equation}
where $P$ is an anti-diagonal matrix given by
\begin{equation}\label{eq:matP}
P = \left[ \begin{array}{cccc} &  &  & b_{0}^{-1} \\ &  & b_{1}^{-1} &
\\ & \adots &  &  \\ b_{n-1}^{-1} &  &  & \end{array}\right]_{n\times n}.
\end{equation}
It is easy to verify that $\mathbf S$ can be written explicitly as
\begin{equation}
\mathbf S_{ij}  = \left\{
  \begin{array}{r@{}l}
	\begin{aligned}
	& 2 b_{0}b_{i-1}^{-1} && \mbox{if } i = j,\\
	& b_{i-j}b_{i-1}^{-1}  && \mbox{if } i > j,\\
	& b_{j-i}b_{j-1}^{-1}  && \mbox{if } i < j.
	 \end{aligned}
  \end{array}
  \right.
\end{equation}
With the above definition, it is not difficult to check that $\mathbf S$ satisfies (P1) and (P2) of Lemma \ref{lem:cholesky}.
We now prove that $\mathbf S$ also satisfies (P3).
In the case of $j =i-1$,  it is trivial to see that the property (P3) indeed holds.
In the general case of $1< j<i-1$ ($i\geq 4$), we need to show that
\begin{equation}
\mathbf S_{i-1,j-1} - \mathbf S_{i,j-1} \leq \mathbf S_{i-1,j} - \mathbf S_{i,j},
\end{equation}
that is equivalent to
$f(j-1) \leq f(j)$,
where
\[
f(x) = \frac{(i-x)^{1-\alpha}-(i-x-1)^{1-\alpha}}{(i-1)^{1-\alpha}-(i-2)^{1-\alpha}}
-\frac{(i-x+1)^{1-\alpha}-(i-x)^{1-\alpha}}{i^{1-\alpha}-(i-1)^{1-\alpha}}.
\]
Note that $f(1) = 0$. Then $f'(x)\ge 0$ for $1<x \le i-1$ can ensure the desired inequality. Below we will verify the positivity of $f'(x)$. By direct computation we can that $f'(x)\ge 0$ is equivalent to, for $1<x \le i-1$,
\begin{equation}\label{eq:ineq_inc}
 \frac{1-(i-x-1)^{\alpha}(i-x)^{-\alpha}}{(i-x-1)^{\alpha} ((i-1)^{1-\alpha}-(i-2)^{1-\alpha})}
\geq  \frac{1-(i-x)^{\alpha}(i-x+1)^{-\alpha}}{(i-x)^{\alpha} (i^{1-\alpha}-(i-1)^{1-\alpha})}.
\end{equation}
It is not difficult to see the order of the numerators above:
\[
1-(i-x-1)^{\alpha}(i-x)^{-\alpha} \geq {1-(i-x)^{\alpha}(i-x+1)^{-\alpha}}.
\]
To prove \eqref{eq:ineq_inc}, it is now sufficient to show that
\begin{equation}\label{eq:ineq_inc3}
(i-x-1)^\alpha((i-1)^{1-\alpha}-(i-2)^{1-\alpha}) \leq (i-x)^\alpha(i^{1-\alpha}-(i-1)^{1-\alpha}).
\end{equation}
To show this, we consider the auxiliary function $p(y) = y^\alpha ( (y+1)^{1-\alpha}-y^{1-\alpha})$ for  $y\in [0, \infty)$.
It is easy to verify that
\begin{eqnarray*}
p'(y) & = & \alpha y^{-(1-\alpha)} (y+1)^{1-\alpha} +  (1-\alpha) y^{\alpha} (y+1)^{-\alpha} - 1 \\
& =& \alpha z^{1-\alpha} +  (1-\alpha) z^{-\alpha} - 1\geq 0,
\end{eqnarray*}
where $z = \frac{y+1}{y} > 1$.
Therefore, we can obtain that $p(i-2)\leq p(i-1)$, i.e.,
\begin{equation}\label{eq:ineq_inc2}
(i-2)^\alpha((i-1)^{1-\alpha}-(i-2)^{1-\alpha}) \leq (i-1)^\alpha(i^{1-\alpha}-(i-1)^{1-\alpha}),\quad  i\geq 4.
\end{equation}
By multiplying \eqref{eq:ineq_inc2} with the following obvious inequality
\[
\left(1-\frac{x-1}{i-2}\right)^\alpha \leq \left(1-\frac{x-1}{i-1}\right)^\alpha,
\]
we obtain \eqref{eq:ineq_inc3} and then \eqref{eq:ineq_inc}. Therefore, $f(x)$ is monotonically increasing.

In summary, $\mathbf S$ satisfies (P1)--(P3) in Lemma \ref{lem:cholesky}.
We then claim that $\mathbf S$ is positive definite and consequently, $\mathbf B$ is  positive definite.
\end{proof}

\vskip .25cm
\begin{theorem}[Fractional energy law]\label{main-thm1}
For the stabilized L1 scheme \eqref{eq:L1STA} to the  time-fractional phase-field equations, the following fractional energy law holds:
\begin{equation}\label{ineq:discrete_energy_dissipation1}
\overline \partial_n^\alpha E = \sum_{k=1}^{n} b_{n-k} \overline \partial_k E   \leq 0 \quad \forall\; 1\leq n \leq N,
\end{equation}
where the discrete fractional derivative $\overline \partial_n^\alpha$ is given by \eqref{eq:partial_t_alpha1}, but now acts on $E^n$.
\end{theorem}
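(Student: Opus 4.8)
The plan is to combine the one-step bound of Lemma~\ref{lem:diffene} with the positive definiteness from Lemma~\ref{lem:L1prop1}. First I would rewrite \eqref{ineq:diffene} at a generic level: replacing $n$ by $k-1$ in Lemma~\ref{lem:diffene} and dividing by $\Delta t$ gives, for every $1\le k\le N$,
\[
\overline\partial_k E=\frac{E^{k}-E^{k-1}}{\Delta t}\le \frac{1}{\gamma}\left<\mathcal G^{-1}\overline\partial_k^\alpha\phi,\ \overline\partial_k\phi\right>.
\]
Because the L1 weights satisfy $b_{n-k}=\frac{\Delta t^{1-\alpha}}{\Gamma(2-\alpha)}\big[(n-k+1)^{1-\alpha}-(n-k)^{1-\alpha}\big]>0$, multiplying this inequality by $b_{n-k}$ and summing over $k=1,\dots,n$ preserves the sign, so
\[
\overline\partial_n^\alpha E=\sum_{k=1}^n b_{n-k}\,\overline\partial_k E\ \le\ \frac{1}{\gamma}\sum_{k=1}^n b_{n-k}\left<\mathcal G^{-1}\overline\partial_k^\alpha\phi,\ \overline\partial_k\phi\right>.
\]
Thus the theorem reduces to proving that the right-hand side is nonpositive.

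For the second step, set $\mathcal A:=-\mathcal G^{-1}$, which is symmetric and nonnegative (for the CH model it is $(-\Delta)^{-1}$ on the mean-zero subspace, to which $\overline\partial_k\phi$ and $\overline\partial_k^\alpha\phi$ belong since the scheme conserves mass). It then suffices to show $\sum_{k=1}^n b_{n-k}\left<\mathcal A\,\overline\partial_k^\alpha\phi,\overline\partial_k\phi\right>\ge 0$. Let $\mathcal A^{1/2}$ be the symmetric nonnegative square root of $\mathcal A$; it commutes with the purely temporal difference operators $\overline\partial_k$ and $\overline\partial_k^\alpha$, so with $w:=\mathcal A^{1/2}\phi$ one has $\left<\mathcal A\,\overline\partial_k^\alpha\phi,\overline\partial_k\phi\right>=\left<\overline\partial_k^\alpha w,\ \overline\partial_k w\right>$ and hence
\[
\sum_{k=1}^n b_{n-k}\left<\mathcal A\,\overline\partial_k^\alpha\phi,\overline\partial_k\phi\right>=\sum_{k=1}^n b_{n-k}\left<\overline\partial_k^\alpha w,\ \overline\partial_k w\right>\ \ge\ 0
\]
by Lemma~\ref{lem:L1prop1} applied to $u=w$. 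Together with the previous display this yields $\overline\partial_n^\alpha E\le 0$. (Alternatively, avoiding square roots: expanding $\overline\partial_k^\alpha\phi=\sum_{j=1}^k b_{k-j}\,\overline\partial_j\phi$ turns the sum into the bilinear form of the matrix $\mathbf B$ of Lemma~\ref{lem:L1prop1}, evaluated on the sequence $(\overline\partial_1\phi,\dots,\overline\partial_n\phi)$ in the $\mathcal A$-weighted inner product; using the Cholesky factorization $\mathbf B+\mathbf B^{\rm T}=\mathbf L\mathbf L^{\rm T}$ from Lemma~\ref{lem:cholesky} it equals $\tfrac12\sum_m\left<\mathcal A\,u_m,u_m\right>$ with $u_m=\sum_j[\mathbf L]_{jm}\,\overline\partial_j\phi$, and each term is nonnegative.)

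As for the main obstacle: the analytically substantial part is the positive definiteness packaged in Lemma~\ref{lem:L1prop1}, so once that and Lemma~\ref{lem:diffene} are available the proof is short. The only points deserving a line of care are (i) that the inequality direction is kept when multiplying by $b_{n-k}$, which uses the elementary positivity of the kernel weights, and (ii) the bookkeeping for the CH model, where $\mathcal G^{-1}$ must be read on the mean-zero subspace and one checks that the relevant increments lie there, i.e., the discrete mass conservation of the L1 scheme.
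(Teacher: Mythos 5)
Your proposal is correct and follows essentially the same route as the paper: apply Lemma~\ref{lem:diffene} at each step $k$, multiply by the positive weights $b_{n-k}$, sum, and then invoke Lemma~\ref{lem:L1prop1} after absorbing $-\mathcal G^{-1}$ into the arguments (the paper does this concretely via $\psi^k=\nabla(-\Delta)^{-1}\overline\partial_k\phi$ for Cahn--Hilliard, which is just your $\mathcal A^{1/2}$ in explicit form). Your remarks on the mean-zero subspace and the alternative Cholesky expansion are sound but not needed beyond what the paper already records.
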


\begin{proof}
It follows from Lemma \ref{lem:diffene} and the definition of discrete fractional derivative that
\begin{equation}\label{eq:discrete_ED_3.1}
  \begin{array}{r@{}l}
	\begin{aligned}
	\overline \partial_n^\alpha E
	& \leq \frac{1}{\gamma} \sum_{k=1}^{n}  b_{n-k}  \left<  \mathcal G^{-1} \overline \partial_k^\alpha \phi,  \overline \partial_k \phi\right> = - \frac{1}{\gamma} \sum_{k=1}^{n}  b_{n-k} \left<  \overline \partial_k^\alpha \psi,  \overline \partial_k \psi\right>,
 	\end{aligned}
  \end{array}
\end{equation}
where $\forall\; 1\leq k\leq n$,
\begin{equation}\label{eq:psik}
  \psi^k = \left\{
  \begin{array}{r@{}l}
	\begin{aligned}
	&  \overline \partial_k \phi && \mbox{Allen--Cahn or MBE model}, \\
	&\nabla (-\Delta)^{-1} \overline \partial_k \phi  && \mbox{Cahn--Hilliard model}.
 	\end{aligned}
  \end{array}
  \right.
\end{equation}
According to Lemma \ref{lem:L1prop1}, we then have $\overline \partial_n^\alpha E \leq 0$.
\end{proof}

\vskip .25cm
We point out that the energy boundedness obtained in \cite{TangYZ19} is a direct corollary of
Theorem \ref{main-thm1}.

\vskip .3cm
\begin{corollary}[Energy boundedness]\label{colloary}
For the L1 scheme \eqref{eq:L1STA} of the time-fractional phase-field models, the discrete energy at $t_n$ is bounded above by the initial energy:
\begin{equation}
E^{n} \leq E^0\quad \forall\; 1\leq n \leq N.
\end{equation}
\end{corollary}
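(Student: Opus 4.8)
The plan is to obtain the bound as a direct consequence of the fractional energy law of Theorem \ref{main-thm1}, exploiting the convex-combination structure of the L1 weights $b_j$ already recorded after \eqref{eq:partial_t_alpha3}. First I would rewrite the inequality $\overline\partial_n^\alpha E \le 0$ of \eqref{ineq:discrete_energy_dissipation1} in its equivalent non-incremental form. Multiplying through by $\Delta t$ and regrouping exactly as in the reformulation of $\bar\partial_n^\alpha\phi$ given in Section \ref{sect2}, one gets
\begin{equation*}
b_0\, E^n \le \sum_{j=1}^{n-1}(b_{j-1}-b_j)\,E^{n-j} + b_{n-1}\, E^0 .
\end{equation*}
The key observation is that the coefficients on the right satisfy $b_{j-1}-b_j>0$, $b_{n-1}>0$, and
$\sum_{j=1}^{n-1}(b_{j-1}-b_j)+b_{n-1}=b_0$, so the right-hand side is $b_0$ times a weighted average of the earlier energies $E^0,\dots,E^{n-1}$.

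The argument then proceeds by induction on $n$. For the base case $n=1$ the law reads $b_0\,\overline\partial_1 E = b_0(E^1-E^0)/\Delta t \le 0$, and since $b_0>0$ this gives $E^1\le E^0$. For the inductive step, assume $E^k\le E^0$ for all $0\le k\le n-1$; inserting these bounds into the displayed inequality yields
\begin{equation*}
b_0\, E^n \le \Bigl(\sum_{j=1}^{n-1}(b_{j-1}-b_j)+b_{n-1}\Bigr)E^0 = b_0\, E^0 ,
\end{equation*}
and dividing by $b_0>0$ gives $E^n\le E^0$, which closes the induction.

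I do not expect any real obstacle here: once the fractional energy law is in hand, the corollary is essentially a one-line consequence of the positivity and the summation identity of the L1 kernel weights (which themselves rest on the concavity of $t\mapsto t^{1-\alpha}$, already noted). The only points requiring a little care are the index bookkeeping in passing from the incremental to the non-incremental form of \eqref{ineq:discrete_energy_dissipation1}, and the fact that the sum over $j$ is empty when $n=1$, so that the base case must be stated separately as above.
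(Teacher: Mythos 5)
Your proof is correct and follows essentially the same route as the paper: both rewrite the fractional energy law $\overline\partial_n^\alpha E\le 0$ as $E^n \le \frac{1}{b_0}\sum_{k=1}^{n-1}(b_{n-k-1}-b_{n-k})E^k + \frac{b_{n-1}}{b_0}E^0$ and conclude by induction using the positivity and telescoping sum of the kernel weights. No issues.
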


\begin{proof}
It follows from \eqref{ineq:discrete_energy_dissipation1} in Theorem \ref{main-thm1} that
\begin{equation}
E^n \leq  \frac{1}{b_0}\sum_{k=1}^{n-1} (b_{n-k-1}-b_{n-k})  E^{k}  + \frac{b_{n-1}}{b_0} E^0.
\end{equation}
When $n = 1$, this inequality gives $E^1\leq E^0$.
By induction on $n$, it is easy to see that $E^n\leq E^0$ always holds.
\end{proof}

\subsection{Weighted energy dissipation law}
\label{subsect2.2}

In \cite{quan2020define},  a weighted energy $E_\omega(t)$ is proposed for time-fractional phase-field equations in the form of
\begin{equation}
\label{eq:newenergy}
	E_\omega(t)=  \frac{1}{B(\alpha,1-\alpha)}\int_0^t \frac{E(s)}{s^{1-\alpha}(t-s)^\alpha} \, {\rm d} s,
\end{equation}
where $B(\alpha,1-\alpha)$ is the Beta function.
It is proved that this weighted energy decays with time on the continuous level, i.e.,
\begin{equation}\label{eq:decay}
E'_\omega(t) \le 0.
\end{equation}
Before stating the discrete weighted energy law, we provide a useful lemma.

\begin{lemma}\label{lem:L1prop2}
For any function $u$ defined on $\Omega \times [0, T]$, the following inequality holds:
\begin{equation}
\sum_{k=1}^{n} t_k^\alpha\, b_{n-k}  \left< \bar\partial_{k}^\alpha u, \bar\partial_{k} u\right> \geq 0 \quad \forall n\geq 1.
\end{equation}
\end{lemma}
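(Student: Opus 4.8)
The plan is to mimic the structure of the proof of Lemma~\ref{lem:L1prop1}, but now with the diagonal weight $t_k^\alpha=(k\Delta t)^\alpha$ inserted. Writing the quadratic form $\sum_{k=1}^n t_k^\alpha b_{n-k}\langle\bar\partial_k^\alpha u,\bar\partial_k u\rangle$ in matrix form, it suffices (component-wise in $\Omega$) to show that the $n\times n$ matrix
\[
\widetilde{\mathbf B}=\mathrm{diag}\bigl(t_n^\alpha b_{n-1},\,t_{n-1}^\alpha b_{n-2},\,\dots,\,t_1^\alpha b_0\bigr)\,\mathbf T,
\]
where $\mathbf T$ is the lower-triangular Toeplitz matrix with entries $(\mathbf T)_{k\ell}=b_{k-\ell}$ for $k\ge\ell$, is positive definite; equivalently that $\widetilde{\mathbf B}+\widetilde{\mathbf B}^{\rm T}$ is positive definite. (Here I am indexing so that row $k$ of the product corresponds to the time level $k$, i.e. the diagonal entry of the product in position $k$ is $t_k^\alpha b_0$, with the weights arranged to match the flipped ordering used in Lemma~\ref{lem:L1prop1}.) I would then apply the conjugate transformation $\widetilde{\mathbf S}=P(\widetilde{\mathbf B}+\widetilde{\mathbf B}^{\rm T})P^{\rm T}$ with the same anti-diagonal matrix $P$ from \eqref{eq:matP}, and check that the resulting symmetric matrix $\widetilde{\mathbf S}$ satisfies the hypotheses (P1)--(P3) of the special Cholesky decomposition Lemma~\ref{lem:cholesky}; positive definiteness then follows immediately.

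Concretely, the entries of $\widetilde{\mathbf S}$ should come out to something like $\widetilde{\mathbf S}_{ij}=t_{?}^\alpha\,\mathbf S_{ij}$ up to the index bookkeeping, i.e. the old matrix $\mathbf S$ from Lemma~\ref{lem:L1prop1} rescaled row-by-row by a power of the time level. More precisely, after the flip by $P$, row $i$ of $\widetilde{\mathbf S}$ carries an extra factor proportional to $(n+1-i)^\alpha\Delta t^\alpha$ compared with $\mathbf S$ (or $i^\alpha$, depending on orientation), with the off-diagonal symmetrization handled exactly as in the unweighted case. The first step is therefore to write down $\widetilde{\mathbf S}_{ij}$ explicitly and factor it as a product of the old $\mathbf S_{ij}$ and a monotone power-weight depending only on the row (or only on $\max(i,j)$ after symmetrization). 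Then (P1) and (P2) — column-decreasing and row-increasing — follow from the corresponding properties of $\mathbf S$ combined with monotonicity of the weight in the appropriate direction, with the caveat that multiplying a row-increasing matrix by a row-constant weight preserves (P2), and multiplying a column-decreasing matrix by an increasing-in-row weight needs a short argument since the weight increases down a column; one checks it directly from the explicit formulas.

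The main obstacle, as in Lemma~\ref{lem:L1prop1}, is verifying the cross-difference condition (P3): $\widetilde{\mathbf S}_{i-1,j-1}-\widetilde{\mathbf S}_{i,j-1}\le\widetilde{\mathbf S}_{i-1,j}-\widetilde{\mathbf S}_{i,j}$. Factoring out the common $\Delta t^\alpha$, this reduces to an inequality about the function
\[
g(x)=w(i)\,f_{i}(x)-\text{(shifted term)},
\]
where $f_i$ is the function $f$ appearing in the proof of Lemma~\ref{lem:L1prop1} and $w(\cdot)$ is the power weight $(\,\cdot\,)^\alpha$ or $(n+1-\cdot)^\alpha$. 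I expect the cleanest route is to reduce, exactly as before, to showing monotonicity of a one-variable auxiliary function: fix $i$ and show that
\[
h(x):=w\cdot\bigl(\mathbf S_{i-1,j}-\mathbf S_{i,j}\bigr)\big|_{j=x}
\]
is nondecreasing in $x$ for $1<x\le i-1$, using that $h(1)=0$ and $h'(x)\ge0$. Because the weight depends only on the row indices $i,i-1$ and not on the column $x$, it factors through the difference and the problem collapses to the \emph{same} one-variable inequality already established (essentially \eqref{eq:ineq_inc}--\eqref{eq:ineq_inc2}), possibly multiplied by the fixed positive ratio $w(i-1)/w(i)$; the only genuinely new point is to confirm that this ratio does not spoil the sign, which is a one-line monotonicity check on $p(y)=y^\alpha((y+1)^{1-\alpha}-y^{1-\alpha})$ already available in the proof of Lemma~\ref{lem:L1prop1}. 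Once (P1)--(P3) are confirmed, Lemma~\ref{lem:cholesky} gives positive definiteness of $\widetilde{\mathbf S}$, hence of $\widetilde{\mathbf B}$, hence the claimed inequality $\sum_{k=1}^n t_k^\alpha b_{n-k}\langle\bar\partial_k^\alpha u,\bar\partial_k u\rangle\ge0$.
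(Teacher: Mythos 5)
Your overall strategy is the paper's: write the quadratic form with the matrix $\widetilde{\mathbf B}=\mathrm{diag}(t_k^\alpha b_{n-k})\cdot\mathbf T$, symmetrize, conjugate by the anti-diagonal matrix $P$ of \eqref{eq:matP}, and verify (P1)--(P3) of Lemma \ref{lem:cholesky}. But there is a genuine gap in how you dispose of (P2) and (P3), and it stems from a misreading of where the weight lands after the flip. Carrying out the conjugation, the lower-triangular part of $\widetilde{\mathbf S}=P(\widetilde{\mathbf B}+\widetilde{\mathbf B}^{\rm T})P^{\rm T}$ is
\[
\widetilde{\mathbf S}_{ij}=(n-j+1)^\alpha\,b_{i-j}\,b_{i-1}^{-1},\qquad i>j,
\]
so the extra factor depends on the \emph{column} index $j$ (equivalently on $\min(i,j)$ after symmetrization), not on the row $i$ as you assume. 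Consequently it is (P1) that comes for free from the unweighted case (the factor $(n-j+1)^\alpha$ is constant down a column), while (P2) and (P3) are exactly where the weight fights you. For (P2) one must show $(n-j+2)^\alpha b_{i-j+1}\le (n-j+1)^\alpha b_{i-j}$, i.e.\ the growth of the Toeplitz entries along a row must beat the \emph{decrease} of the weight; this uses the constraint $i\le n$ and a Bernoulli-type estimate $\bigl(1-\tfrac{1}{i-x+1}\bigr)^\alpha\le 1-\tfrac{\alpha}{i-x+1}$ applied to the auxiliary function $g(x)=(n-x+1)^\alpha\bigl((i-x+1)^{1-\alpha}-(i-x)^{1-\alpha}\bigr)$. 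This is the technical heart of the lemma and is not a corollary of anything established in Lemma \ref{lem:L1prop1}.

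Your treatment of (P3) is where the argument actually breaks: you claim the weight ``factors through the difference,'' so that the cross-difference condition collapses to the inequality already proved in Lemma \ref{lem:L1prop1} up to a fixed positive ratio $w(i-1)/w(i)$. Since the weight varies with the column rather than the row, the two sides of $\widetilde{\mathbf S}_{i-1,j-1}-\widetilde{\mathbf S}_{i,j-1}\le\widetilde{\mathbf S}_{i-1,j}-\widetilde{\mathbf S}_{i,j}$ carry \emph{different} factors $(n-j+2)^\alpha$ and $(n-j+1)^\alpha$, and no common factor cancels. The paper handles this by introducing a new auxiliary function $h(x)=(n-x+1)^\alpha\bigl[\cdots\bigr]$ (the bracket being the same combination that appears in the unweighted $f$) and proving $h'(x)\ge0$ by an argument analogous to, but not a consequence of, the monotonicity proofs for $f$ and $g$. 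So your route can be completed, but the two inequalities you dismiss as short checks are the substance of the proof and must be supplied; as written, the proposed reduction of (P3) is not valid.
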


\begin{proof}
It is sufficient to prove that
\begin{equation*}
\mathbf B =
\left[\begin{array}{ccccc} 1^\alpha b_{n-1}  &  &  &  &  \\ & 2^\alpha b_{n-2} &  &  &  \\ &  & \ddots &  &  \\ &  &  & (n-1)^\alpha b_1 &  \\ &  &  &  & n^\alpha b_0\end{array}\right]
\left[\begin{array}{ccccc}b_0 &  &  &  &  \\b_1 & b_0 &  &  &  \\\vdots & \vdots & \ddots &  &  \\b_{n-2} & b_{n-3} & \cdots & b_0 &  \\b_{n-1} & b_{n-2} & \cdots & b_1 & b_0\end{array}\right]
\end{equation*}
is positive definite.
It is sufficient to show that the symmetric matrix $\mathbf B + \mathbf B^{\rm T}$ is positive definite.
Similar to the proof of Lemma \ref{lem:L1prop1}, we make the following conjugate transformation:
${\mathbf S} = P \left({\mathbf B}+{\mathbf B}^{\rm T}\right) P^{\rm T}$,
where the anti-diagonal matrix $P$ is given by \eqref{eq:matP}.
Then, $\mathbf S$ can be written explicitly as
\begin{equation}
\mathbf S_{ij}  = \left\{
  \begin{array}{r@{}l}
	\begin{aligned}
	& 2 (n-i+1)^\alpha b_{0}b_{i-1}^{-1} && \mbox{if } i = j,\\
	& (n-j+1)^\alpha b_{i-j}b_{i-1}^{-1}  && \mbox{if } i > j,\\
	& (n-i+1)^\alpha b_{j-i}b_{j-1}^{-1}  && \mbox{if } i < j.
	 \end{aligned}
  \end{array}
  \right.
\end{equation}
To prove the positive definiteness of $\mathbf S$, we need to show that $\mathbf S$ satisfies the three properties (P1)--(P3) in Lemma \ref{lem:cholesky}.

We first check (P1). In fact, it is sufficient to show that for any fixed $j$, the following inequality holds for all $i\geq j\geq 1$:
\begin{eqnarray}
 && \frac{b_{i-j} }{b_{i-1}} = \frac{(i-j+1)^{1-\alpha}-(i-j)^{1-\alpha}}{i^{1-\alpha}-(i-1)^{1-\alpha}} \nonumber \\
 &\geq& \frac{b_{i-j+1} }{b_{i}} = \frac{(i-j+2)^{1-\alpha}-(i-j+1)^{1-\alpha}}{{(i+1)}^{1-\alpha}-i^{1-\alpha}}, \nonumber
\end{eqnarray}
which is equivalent to
\begin{equation}\label{eq:ineqP1}
\frac{{(i+1)}^{1-\alpha}-i^{1-\alpha}}{i^{1-\alpha}-(i-1)^{1-\alpha}}
\geq
\frac{(i-j+2)^{1-\alpha}-(i-j+1)^{1-\alpha}}{(i-j+1)^{1-\alpha}-(i-j)^{1-\alpha}}.
\end{equation}
We consider the following function
\begin{equation*}
f(x) = \frac{{(x+1)}^{1-\alpha}-x^{1-\alpha}}{x^{1-\alpha}-(x-1)^{1-\alpha}},\qquad x\geq 1.
\end{equation*}
It is easy to verify that
\[
f'(x) = \frac{ (1-\alpha)\left( 2x^\alpha - (x-1)^\alpha - (x+1)^\alpha \right) }{x^\alpha(x-1)^\alpha(x+1)^\alpha(x^{1-\alpha}-(x-1)^{1-\alpha})^2}
\geq 0.
\]
Since $j\geq 1$, we can then claim that $f(i) \geq f(i-j+1)$. Hence \eqref{eq:ineqP1} is true.
Consequently, $\mathbf S$ satisfies the property (P1).

We then check (P2). We shall prove that for any fixed $i$, the following inequality holds for $1\leq j< i\leq n$:
\[
(n-j+1)^\alpha b_{i-j} \leq (n-j)^\alpha b_{i-j-1},
\]
which is equivalent to
\begin{eqnarray}
&& (n-j+1)^\alpha \left((i-j+1)^{1-\alpha} - (i-j)^{1-\alpha} \right) \nonumber \\
&\leq& (n-j)^\alpha \left((i-j)^{1-\alpha}  - (i-j-1)^{1-\alpha} \right). \nonumber
\end{eqnarray}
Consider the auxiliary function
\[
g(x) = (n-x+1)^\alpha \left((i-x+1)^{1-\alpha}  - (i-x)^{1-\alpha} \right),\quad 1\leq x<i.
\]
Below we show that $g'(x)\geq 0$. Straightforward computation gives
\begin{equation*}
	g'(x)= -\frac{(1-\alpha)n + \alpha i -x+1  }{(n-x+1)^{1-\alpha} (i-x+1)^{\alpha}}
	+ \frac{(1-\alpha)n + \alpha i -x+1-\alpha }{(n-x+1)^{1-\alpha} (i-x)^{\alpha}}.
\end{equation*}
For any $1\leq x< i$, $g'(x)\geq 0$ is equivalent to
\begin{equation}\label{ineq:g_der_x}
\left( \frac{i-x+1}{i-x} \right)^\alpha \geq
\frac{(1-\alpha)n + \alpha i -x+1 }{(1-\alpha)n + \alpha i -x+1-\alpha}.
\end{equation}
Since $i\leq n$, the right-hand side of (\ref{ineq:g_der_x}) satisfies, for $1\le x < i$,
\begin{equation*}
\frac{(1-\alpha)n + \alpha i -x+1 }{(1-\alpha)n + \alpha i -x+1-\alpha} \leq \frac{ i -x+1 }{i -x+1-\alpha}.
\end{equation*}
In order to obtain \eqref{ineq:g_der_x}, it is sufficient to show the following inequality:
\begin{equation*}
\left( \frac{i-x+1}{i-x} \right)^\alpha \geq \frac{ i -x+1 }{i -x+1-\alpha},
\end{equation*}
that is,
\begin{equation}\label{ineq:g_der_x2}
i-x+1-\alpha \geq (i-x+1) \left(1-\frac{1}{i-x+1}\right)^\alpha.
\end{equation}
It can be easily verify by using Taylor expansion that
\begin{equation}
\left(1-\frac{1}{i-x+1}\right)^\alpha \leq 1-\frac{\alpha}{i-x+1},
\end{equation}
which yields \eqref{ineq:g_der_x2}. Consequently, \eqref{ineq:g_der_x} is true. Therefore, $g'(x)\ge 0$ which verifies (P2).

Thirdly, we check (P3). In the case of $j =i-1$,  it is trivial to show that the property (P3) holds according to (P1) and (P2).
In the general case of $2\leq j\leq i-2$ (hence, $4\leq i\leq n$), we shall prove
\begin{equation}
\mathbf S_{i-1,j-1} - \mathbf S_{i,j-1} \leq \mathbf S_{i-1,j} - \mathbf S_{i,j},
\end{equation}
which is equivalent to $h(j-1) \leq h(j)$,
where
\[
h(x) = (n-x+1)^\alpha \left[ \frac{(i-x)^{1-\alpha}-(i-x-1)^{1-\alpha}}{(i-1)^{1-\alpha}-(i-2)^{1-\alpha}}
-\frac{(i-x+1)^{1-\alpha}-(i-x)^{1-\alpha}}{i^{1-\alpha}-(i-1)^{1-\alpha}} \right],
\]
with $2\leq x \leq i-2$. Similar to the proof of positivity of $f'(x)$ and $g'(x)$, we can prove that $h'(x)\geq 0$.
Therefore, the property (P3) holds for $\mathbf S$.

In summary, $\mathbf S$ satisfies (P1)--(P3) in Lemma \ref{lem:cholesky}.
Consequently, $\mathbf S$ is positive definite and therefore $\mathbf B $ is positive definite.
\end{proof}

\vskip .25cm

\begin{theorem}[Weighted energy dissipation]\label{main-thm2}
For any $\alpha\in (0,1)$, the energy of the stabilized L1 scheme \eqref{eq:L1STA} for the time-fractional phase-field equations satisfies
\begin{equation}\label{eq:Ew_decay}
\widetilde E^n \leq \widetilde E^{n-1} \quad\forall\; 1\leq n\leq N,
\end{equation}
where $\widetilde E^n$ denotes the following discrete weighted energy:
\begin{equation}
\widetilde E^n = E^0 + \Delta t \sum_{m=1}^n D^m,
\quad\mbox{with}\quad
D^m = \frac{1}{\Gamma(\alpha) t_m}\sum_{k=1}^{m} t_k^\alpha \, b_{m-k} \, \overline \partial_k E.
\end{equation}
\end{theorem}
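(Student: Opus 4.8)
The plan is to show that $\widetilde E^n - \widetilde E^{n-1} = \Delta t\, D^n \le 0$, which by the definition of $D^n$ reduces to proving that
$$
\sum_{k=1}^{n} t_k^\alpha\, b_{n-k}\, \overline\partial_k E \le 0 \quad \forall\, 1\le n\le N.
$$
This is the discrete analogue of the weighted-energy inequality $E_\omega'(t)\sim \int_0^t s^\alpha E'(s)(t-s)^{-\alpha}\,{\rm d}s\le 0$ from \eqref{explian}, with $t_k^\alpha b_{n-k}$ playing the role of the kernel $s^\alpha(t-s)^{-\alpha}$ and $\overline\partial_k E$ the role of $E'(s)$. So the structure should mirror the proof of Theorem \ref{main-thm1} exactly.

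First I would apply Lemma \ref{lem:diffene}, which gives $\overline\partial_k E = (E^k-E^{k-1})/\Delta t \le \tfrac1\gamma\langle \mathcal G^{-1}\overline\partial_k^\alpha\phi,\, \overline\partial_k\phi\rangle$ (after dividing the inequality \eqref{ineq:diffene} by $\Delta t$). Substituting into the sum and introducing $\psi^k$ as in \eqref{eq:psik} — so that $\langle\mathcal G^{-1}\overline\partial_k^\alpha\phi,\overline\partial_k\phi\rangle = -\langle\overline\partial_k^\alpha\psi,\overline\partial_k\psi\rangle$ in all the relevant models, using symmetry and nonnegativity of $-\mathcal G^{-1}$ and of $(-\Delta)^{-1}$ — I get
$$
\sum_{k=1}^{n} t_k^\alpha\, b_{n-k}\, \overline\partial_k E \le -\frac1\gamma \sum_{k=1}^n t_k^\alpha\, b_{n-k}\, \langle \overline\partial_k^\alpha\psi,\, \overline\partial_k\psi\rangle.
$$
Now Lemma \ref{lem:L1prop2}, applied with $u=\psi$, says exactly that $\sum_{k=1}^n t_k^\alpha b_{n-k}\langle\overline\partial_k^\alpha\psi,\overline\partial_k\psi\rangle \ge 0$, so the right-hand side is $\le 0$ and we are done. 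Expanding $D^n = \tfrac{1}{\Gamma(\alpha)t_n}\sum_{k=1}^n t_k^\alpha b_{n-k}\overline\partial_k E$ shows $D^n\le 0$, hence $\widetilde E^n = \widetilde E^{n-1} + \Delta t\, D^n \le \widetilde E^{n-1}$, which is \eqref{eq:Ew_decay}.

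The only real work is Lemma \ref{lem:L1prop2}, which has already been proved in the excerpt; conditional on it, the theorem is a short assembly. The one point that needs a word of care is the reduction $\langle\mathcal G^{-1}\overline\partial_k^\alpha\phi,\overline\partial_k\phi\rangle = -\langle\overline\partial_k^\alpha\psi,\overline\partial_k\psi\rangle$ in the Cahn--Hilliard case: there $\mathcal G^{-1}=\Delta^{-1}$, and writing $\overline\partial_k^\alpha\phi = \sum_j b_{k-j}\overline\partial_j\phi$ one checks that $\langle\Delta^{-1}\overline\partial_k^\alpha\phi,\overline\partial_k\phi\rangle = -\langle\nabla(-\Delta)^{-1}\overline\partial_k^\alpha\phi,\,\nabla(-\Delta)^{-1}\overline\partial_k\phi\rangle = -\langle\overline\partial_k^\alpha\psi,\overline\partial_k\psi\rangle$ since the discrete fractional operator commutes with the (self-adjoint, periodic) operator $\nabla(-\Delta)^{-1}$ acting on each time slice. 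With that observation the proof is complete, and I would present it in essentially the three lines above, citing Lemmas \ref{lem:diffene} and \ref{lem:L1prop2}.
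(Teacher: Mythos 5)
Your proposal is correct and follows the paper's own proof essentially verbatim: reduce \eqref{eq:Ew_decay} to $D^n\leq 0$, bound $\overline\partial_k E$ via Lemma \ref{lem:diffene}, pass to $\psi^k$ as in \eqref{eq:psik}, and invoke Lemma \ref{lem:L1prop2}. The extra remark on the Cahn--Hilliard reduction is a valid elaboration of a step the paper leaves implicit.
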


\begin{proof}
Note that $D^n$ is an approximation to the derivative of weighted energy in \eqref{eq:decay}.
To derive \eqref{eq:Ew_decay}, it is sufficient to prove $D^n\leq 0$ for all $1\leq n\leq N$.
According to \eqref{ineq:diffene}, we have the following inequality:
\begin{eqnarray}\label{eq:discrete_ED_3.2}
	\Gamma(\alpha)t_n D^n
	& =&  \sum_{k=1}^{n} t_{k}^\alpha b_{n-k} \overline \partial_k E
	 \leq \frac{1}{\gamma} \sum_{k=1}^{n} t_{k}^\alpha  b_{n-k}  \left<  \mathcal G^{-1} \overline \partial_k^\alpha \phi, \overline \partial_k \phi \right> \nonumber \\
	 & =& - \frac{1}{\gamma} \sum_{k=1}^{n} t_{k}^\alpha  b_{n-k}  \left<  \overline \partial_k^\alpha \psi, \overline \partial_k \psi \right>,
\end{eqnarray}
where $\psi^k$ is given by \eqref{eq:psik}.
Combining Lemma \ref{lem:L1prop2} and \eqref{eq:discrete_ED_3.2}, we conclude that $D^n\leq 0$.
\end{proof}

\vskip .25cm

We point out that Corollary \ref{colloary} can also be deduced from Theorem \ref{main-thm2}.
Further, the weighted energy dissipation law is even stronger than the fractional energy law result.
In fact, Theorem \ref{main-thm1} and Theorem \ref{main-thm2} state the following two inequalities respectively:
\begin{equation}
b_0 (E^n-E^0) \leq \sum_{k=1}^{n-1} (b_{n-k-1}-b_{n-k})\left(E^{k}-E^0\right), \label{ineq:rmk1}
\end{equation}
and
\begin{equation}
b_0 n^\alpha (E^n-E^0) \leq \sum_{k=1}^{n-1} \left[(k+1)^\alpha b_{n-k-1}-k^\alpha b_{n-k} \right]\left(E^{k}-E^0\right),
\label{ineq:rmk2}
\end{equation}
respectively. We can show that \eqref{ineq:rmk1} can be deduced from \eqref{ineq:rmk2}.
This proof is technical and is omitted here.

{
\section{($2-\alpha$)-order L1-SAV scheme}\label{sect3}
We have provided two discrete energy laws for the first-order stabilized L1 scheme, corresponding to the energy property of the governing equation introduced in \cite{du2020time,quan2020define}.
We will show in this section that Lemma \ref{lem:cholesky} can also be used to analyze the energy stability of high order scheme, i.e., the energy boundedness and the fractional energy law of a ($2-\alpha$)-order L1-SAV scheme.

Inspired by the extended SAV scheme in \cite{hou2019highly}, we consider a semi-discrete implicit scheme using the L1 approximation for the fractional derivative, the Crank--Nicolson discretization of the Laplace term, and the SAV technique \cite{shenxuyang19} for the nonlinear term:
\begin{subequations}
\label{eq:L1SAV}
\begin{align}
& \bar\partial_{n+\frac 1 2} ^\alpha \phi = \gamma \mathcal G \mu^{n+\frac 1 2},  \label{eq:L1SAV1}\\
& \mu^{n+\frac 1 2}  = \mathcal L \phi^{n+\frac 1 2} + \frac{r^{n+\frac 1 2}}{\sqrt{E_1(\bar \phi^{n+\frac 1 2})+C_0}} \delta_\phi E_1(\bar \phi^{n+\frac 1 2}), \label{eq:L1SAV2}\\
& r^{n+1}-r^{n}  = \frac{1}{2\sqrt{E_1(\bar \phi^{n+\frac 1 2})+C_0}} \left<\delta_\phi E_1(\bar \phi^{n+\frac 1 2}), \phi^{n+1}-\phi^n\right>. \label{eq:L1SAV3}
\end{align}
\end{subequations}
with
\[
 \phi^{n+\frac 1 2} =\frac 1 2(\phi^{n+1}+\phi^n),\quad r^{n+\frac 1 2} =\frac 1 2(r^{n+1}+r^n), \quad  \bar \phi^{n+\frac 1 2} = \frac 3 2 \phi^n-\frac 1 2 \phi^{n-1}.
\]
The discrete fractional derivative operator $\bar\partial_{n+\frac 1 2} ^\alpha $ is given by (see \cite{hou2019highly})
\begin{equation}\label{eq:L1half}
\bar\partial_{n+\frac 1 2}^\alpha  \phi = \sum_{j=0}^n \tilde b_{n-j} \bar\partial_{j+1}  \phi,
\end{equation}
where $\tilde b_0 = {\Delta t^{1-\alpha} 2^{\alpha -1}}/{ \Gamma(2-\alpha)}$ and
\[
\tilde b_j = \frac{\Delta t^{1-\alpha}}{ \Gamma(2-\alpha)}\left[ \left(j+\frac 1 2\right)^{1-\alpha} - \left(j-\frac 1 2\right)^{1-\alpha} \right],
\quad j \geq 1.
\]
 It is shown in \cite{hou2019highly}  that the order of truncation error of $\bar \partial^\alpha_{n+\frac 1 2} \phi$ to $\partial_t^\alpha \phi(t_{n+\frac 1 2})$ is $2-\alpha$. Hence the scheme \eqref{eq:L1SAV} is ($2-\alpha$)-order in time.

We now state two properties of the operator $\bar\partial_{n+\frac 1 2}^\alpha$, which ensures the energy boundedness and the fractional energy law.

\vskip .25cm

\begin{lemma}
For any function $u$ defined on $\Omega \times [0, T]$, the following inequalities hold:
\begin{eqnarray}
&& \label{eq:L1halfbound}
\sum_{k=0}^{n} \left<\bar\partial_{k+\frac 1 2}^\alpha u, \bar\partial_{k+1} u\right> \geq 0 \quad \forall n\geq 0,
\\
&& \label{eq:L1halfprop}
\sum_{k=0}^{n} \widetilde b_{n-k}  \left<\bar\partial_{k+\frac 1 2}^\alpha u, \bar\partial_{k+1} u\right> \geq 0 \quad \forall n\geq 0.
\end{eqnarray}
\end{lemma}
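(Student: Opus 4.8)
The plan is to recast each inequality as the positive (semi-)definiteness of an explicit structured matrix built from the midpoint weights $\{\widetilde b_j\}$, and then to exploit the monotonicity and convexity of $x\mapsto x^{1-\alpha}$ exactly as in the proofs of Lemmas \ref{lem:L1prop1} and \ref{lem:L1prop2}. Setting $v_k=\bar\partial_{k+1}u$ for $0\le k\le n$ and using the definition \eqref{eq:L1half}, the left-hand side of \eqref{eq:L1halfbound} equals the quadratic form $v^{\rm T}\widetilde{\mathbf L}\,v$, where $\widetilde{\mathbf L}$ is the lower-triangular Toeplitz matrix with $\widetilde{\mathbf L}_{ij}=\widetilde b_{i-j}$ for $i\ge j$, while the left-hand side of \eqref{eq:L1halfprop} equals $v^{\rm T}\mathbf B\,v$ with $\mathbf B={\rm diag}(\widetilde b_n,\dots,\widetilde b_0)\,\widetilde{\mathbf L}$. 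It therefore suffices to show that $\widetilde{\mathbf L}+\widetilde{\mathbf L}^{\rm T}$ and $\mathbf B+\mathbf B^{\rm T}$ are positive (semi-)definite.

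For \eqref{eq:L1halfbound}, the matrix $\widetilde{\mathbf L}+\widetilde{\mathbf L}^{\rm T}$ is the symmetric Toeplitz matrix with generating sequence $(2\widetilde b_0,\widetilde b_1,\widetilde b_2,\dots)$. I would prove that this sequence is non-negative, non-increasing and convex, which makes the matrix positive semidefinite (such a sequence is a non-negative combination of Fejér-kernel sequences, each giving a positive semidefinite Toeplitz matrix). For $j\ge1$ we have $\widetilde b_j=g(j)$ with $g(x)=(x+\tfrac12)^{1-\alpha}-(x-\tfrac12)^{1-\alpha}$, and $g>0$, $g'<0$, $g''>0$ give monotonicity and convexity of the tail; the only genuinely new checks are at the anomalous first weight, namely $2\widetilde b_0\ge\widetilde b_1$ (which reduces to $3^{1-\alpha}\le 3$) and the convexity at the first node, $2\widetilde b_0-2\widetilde b_1+\widetilde b_2\ge0$.

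For \eqref{eq:L1halfprop}, I would follow the template of Lemma \ref{lem:L1prop1}: conjugate $\mathbf B+\mathbf B^{\rm T}$ by the anti-diagonal matrix $P$ with entries $\widetilde b_{i-1}^{-1}$, obtaining the symmetric matrix $\mathbf S$ with $\mathbf S_{ij}=\widetilde b_{i-j}\widetilde b_{i-1}^{-1}$ for $i>j$, $\mathbf S_{ij}=\widetilde b_{j-i}\widetilde b_{j-1}^{-1}$ for $i<j$ and $\mathbf S_{ii}=2\widetilde b_0\widetilde b_{i-1}^{-1}$, and then verifying (P1)--(P3) of Lemma \ref{lem:cholesky}. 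For entries with both indices in the ``regular'' range, the monotonicity of $f(x)=\frac{(x+1)^{1-\alpha}-x^{1-\alpha}}{x^{1-\alpha}-(x-1)^{1-\alpha}}$ and the second-difference estimate used for the function $h$ in Lemma \ref{lem:L1prop2} carry over verbatim (here even without the extra factor $(n-x+1)^\alpha$), and (P3) in the case $j=i-1$ is again trivial. The new content is confined to the boundary row/column, where the half-weight $\widetilde b_0$ enters: (P2) at $j=i$ asks for $\widetilde b_1<2\widetilde b_0$ (again $3^{1-\alpha}<3$), and (P1) at $j=i-1$ reduces, after using the log-convexity that $\widetilde b_{m-1}/\widetilde b_m$ is non-increasing for $m\ge2$, to the single inequality
\[
\widetilde b_1^{\,2}\le 2\,\widetilde b_0\,\widetilde b_2,
\qquad\text{i.e.}\qquad
\big[(\tfrac32)^{1-\alpha}-(\tfrac12)^{1-\alpha}\big]^{2}\le 2(\tfrac12)^{1-\alpha}\big[(\tfrac52)^{1-\alpha}-(\tfrac32)^{1-\alpha}\big].
\]
By the arithmetic--geometric mean inequality, this bound also yields the first-node convexity $2\widetilde b_0-2\widetilde b_1+\widetilde b_2\ge0$ needed above, so both claims hinge on it.

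The main obstacle is precisely this boundary estimate $\widetilde b_1^{\,2}\le 2\widetilde b_0\widetilde b_2$. Unlike the clean monotonicity enjoyed by the integer-node L1 weights $b_j$ in Lemmas \ref{lem:L1prop1}--\ref{lem:L1prop2}, the midpoint weights need not be monotone ($\widetilde b_0<\widetilde b_1$ is possible for small $\alpha$), so this inequality does not follow from a soft convexity argument, and moreover it is tight as $\alpha\to0^+$. I expect to establish it by setting $\beta=1-\alpha\in(0,1)$ and showing that the one-variable function $\beta\mapsto 2(\tfrac12)^{\beta}\big[(\tfrac52)^{\beta}-(\tfrac32)^{\beta}\big]-\big[(\tfrac32)^{\beta}-(\tfrac12)^{\beta}\big]^{2}$ is non-negative on $[0,1]$ (it vanishes at $\beta=1$ and is positive at $\beta=0$), for instance by analysing the sign of its derivative or by a term-by-term comparison of the relevant binomial series. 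Once this is in hand, (P1)--(P3) hold for $\mathbf S$, Lemma \ref{lem:cholesky} gives positive definiteness of $\mathbf B$ and hence \eqref{eq:L1halfprop}, while \eqref{eq:L1halfbound} follows from the convex-symbol argument above.
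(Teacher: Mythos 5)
Your proposal is correct and follows essentially the same route as the paper: for \eqref{eq:L1halfprop} you conjugate $\mathbf B+\mathbf B^{\rm T}$ by the anti-diagonal matrix of reciprocals and verify (P1)--(P3) of Lemma \ref{lem:cholesky}, hinging on exactly the two facts the paper cites ($2\tilde b_0\tilde b_2\geq\tilde b_1^2$ and the log-convexity $\tilde b_{j-1}\tilde b_{j+1}\geq\tilde b_j^2$ for $j\geq 2$), and the boundary inequality you flag as the main obstacle is likewise asserted without detailed proof in the paper. Your only deviation is proving \eqref{eq:L1halfbound} via the convex-decreasing-symbol (Fej\'er) criterion for Toeplitz positive semidefiniteness rather than reapplying Lemma \ref{lem:cholesky}, but this rests on the same two checks ($2\tilde b_0\geq\tilde b_1$ and $2\tilde b_0-2\tilde b_1+\tilde b_2\geq 0$) that the paper uses, so it is a cosmetic rather than substantive difference.
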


\begin{proof}
We provide a brief schematic of the proof.
Rewriting \eqref{eq:L1halfbound} into a quadratic form (omitting details here), it is sufficient to prove that
\begin{equation}
\mathbf A =
\left[\begin{array}{ccccc} \tilde b_0 &  &  &  &  \\ \tilde b_1 & \tilde b_0 &  &  &  \\\vdots & \vdots & \ddots &  &  \\ \tilde b_{n-1} & \tilde b_{n-2} & \cdots & \tilde b_0 &  \\ \tilde b_n & \tilde b_{n-1} & \cdots & \tilde b_1 & \tilde b_0
\end{array}\right]
\end{equation}
is positive definite.
Using the facts $2\tilde b_0>\tilde b_1$ and $2\tilde b_0 + \tilde b_2 >2\tilde b_1$, it is easy to verify that $\mathbf A + \mathbf A^{\rm T}$ satisfies the three conditions in Lemma \ref{lem:cholesky} and is therefore positive definite.
Thus, $\mathbf A$ is positive definite.

Similarly, to prove \eqref{eq:L1halfprop} it is sufficient to prove that
\begin{equation}
\mathbf B =
\left[\begin{array}{ccccc} \tilde b_n &  &  &  &  \\ & \tilde b_{n-1} &  &  &  \\ &  & \ddots &  &  \\ &  &  & \tilde b_1 &  \\ &  &  &  & \tilde b_0 \end{array}\right]
\left[\begin{array}{ccccc} \tilde b_0 &  &  &  &  \\ \tilde b_1 & \tilde b_0 &  &  &  \\\vdots & \vdots & \ddots &  &  \\ \tilde b_{n-1} & \tilde b_{n-2} & \cdots & \tilde b_0 &  \\ \tilde b_n & \tilde b_{n-1} & \cdots & \tilde b_1 & \tilde b_0\end{array}\right]
\end{equation}
is positive definite.
We consider the following conjugate transformation of $\mathbf B + \mathbf B^{\rm T}$:
\begin{equation}\label{eq:conj0}
 {\mathbf S} = P \left( {\mathbf B} +{\mathbf B}^{\rm T} \right) P^{\rm T},
\end{equation}
where $P$ is an anti-diagonal matrix
\begin{equation}\label{eq:Pn0}
P = \left[ \begin{array}{cccc} &  &  & \tilde b_{0}^{-1} \\ &  & \tilde b_{1}^{-1} &  \\ & \adots &  &  \\ \tilde b_{n}^{-1} &  &  & \end{array}\right].
\end{equation}
Similar to the proof of Lemma \ref{lem:L1prop1}, one can also verify that $\mathbf S$ satisfies the three conditions in Lemma \ref{lem:cholesky} and is therefore positive definite.
In particular, we used the facts that $2 \tilde b_0 \tilde b_2\geq \tilde b_1^2$ and $b_{j-1}b_{j+1}\geq b_j^2$ for $j\geq 2$, which are useful in the verification.
The details are omitted here.
\end{proof}

\vskip .25cm

By taking the inner product of \eqref{eq:L1SAV2} with $\bar\partial_{n+1} \phi$ and multiplying \eqref{eq:L1SAV3} with $2r^{n+\frac 1 2}$, we can obtain
\begin{equation}\label{eq:pn1E}
\bar\partial_{n+1} E \leq \left<\mu^{n+\frac 1 2}, \bar\partial_{n+1} \phi\right>,
\end{equation}
where $E^n = \frac 1 2 \left<\mathcal L \phi^n, \phi^n \right> + (r^n)^2$ is the modified energy.
Combining \eqref{eq:L1SAV1}, \eqref{eq:L1halfbound}, \eqref{eq:L1halfprop}, and \eqref{eq:pn1E}, we then derive the energy boundedness for the modified energy as stated below.

\vskip .25cm

\begin{theorem}[Energy boundedness \& Fractional energy law]\label{main-thm3}
For the ($2-\alpha$)-order L1-SAV scheme \eqref{eq:L1SAV}, the energy boundedness
\begin{equation}\label{eq:enebound1}
E^{n+1}\leq E^0 \qquad \forall n\geq 0,
\end{equation}
and the fractional energy law
\begin{equation}\label{eq:fraclaw1}
\bar\partial_{n+\frac 1 2} ^\alpha  E = \sum_{k=0}^{n} \tilde b_{n-k} \bar\partial_{k+1} E \leq 0 \qquad \forall n\geq 0
\end{equation}
hold true, where $E^n$ is the modified energy.
\end{theorem}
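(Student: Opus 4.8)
The plan is to reuse, essentially verbatim, the machinery of the proofs of Theorems~\ref{main-thm1} and~\ref{main-thm2}: combine the one-step energy-variation inequality \eqref{eq:pn1E} with the two quadratic-form inequalities \eqref{eq:L1halfbound} and \eqref{eq:L1halfprop} for the half-step operator $\bar\partial^\alpha_{n+\frac12}$.

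First I would eliminate $\mu^{n+\frac12}$ from \eqref{eq:pn1E} by means of \eqref{eq:L1SAV1}, which gives $\mu^{n+\frac12}=\gamma^{-1}\mathcal G^{-1}\bar\partial^\alpha_{n+\frac12}\phi$, so that
\begin{equation*}
\bar\partial_{n+1}E\ \le\ \frac1\gamma\bigl\langle\mathcal G^{-1}\bar\partial^\alpha_{n+\frac12}\phi,\ \bar\partial_{n+1}\phi\bigr\rangle,\qquad n\ge0.
\end{equation*}
Since $\mathcal G^{-1}$ is symmetric and negative semidefinite (equal to $-I$ for the AC/MBE models and to $\Delta^{-1}$ acting on mean-zero fields for the CH model, whose discrete solution conserves the spatial mean of $\phi$), I would factor $\langle\mathcal G^{-1}a,b\rangle=-\langle\mathcal H a,\mathcal H b\rangle$ with $\mathcal H=I$ or $\mathcal H=\nabla(-\Delta)^{-1}$, and set $\psi^k=\mathcal H\phi^k$ analogously to \eqref{eq:psik}. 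Because $\bar\partial^\alpha_{n+\frac12}$ and $\bar\partial_{n+1}$ are linear and commute with the time-independent operator $\mathcal H$, the previous display becomes
\begin{equation*}
\bar\partial_{n+1}E\ \le\ -\frac1\gamma\bigl\langle\bar\partial^\alpha_{n+\frac12}\psi,\ \bar\partial_{n+1}\psi\bigr\rangle,\qquad n\ge0.
\end{equation*}

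The two conclusions then follow by summing this inequality in two ways. For the energy boundedness \eqref{eq:enebound1} I would sum over $k=0,\dots,n$: the left-hand side telescopes to $(E^{n+1}-E^0)/\Delta t$, while the right-hand side equals $-\gamma^{-1}\sum_{k=0}^n\langle\bar\partial^\alpha_{k+\frac12}\psi,\bar\partial_{k+1}\psi\rangle\le0$ by \eqref{eq:L1halfbound} applied with $u=\psi$; hence $E^{n+1}\le E^0$. For the fractional energy law \eqref{eq:fraclaw1} I would instead take the weighted combination $\sum_{k=0}^n\widetilde b_{n-k}\bar\partial_{k+1}E$, which is a legitimate combination of the above one-step inequalities since all coefficients satisfy $\widetilde b_j\ge0$; this gives $\bar\partial^\alpha_{n+\frac12}E\le-\gamma^{-1}\sum_{k=0}^n\widetilde b_{n-k}\langle\bar\partial^\alpha_{k+\frac12}\psi,\bar\partial_{k+1}\psi\rangle\le0$ by \eqref{eq:L1halfprop}. (Solvability of \eqref{eq:L1SAV} for $(\phi^{n+1},r^{n+1})$, needed in order to speak of $E^{n+1}$, follows from the usual Schur-complement argument for SAV schemes because the scheme is linear in the unknowns, and is not an obstacle.)

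With the quadratic-form lemmas \eqref{eq:L1halfbound}--\eqref{eq:L1halfprop} in hand, everything above is bookkeeping---the $\mathcal G^{-1}$ factorization and the sign of the $\widetilde b_j$---so I expect no genuine obstacle here. The step that would be the real difficulty, were it not already carried out in the preceding lemma, is the positive definiteness of the matrices behind \eqref{eq:L1halfbound} and \eqref{eq:L1halfprop}: verifying property (P3) of Lemma~\ref{lem:cholesky} for the conjugated matrix $\mathbf S$ comes down to the convexity-type estimates $2\widetilde b_0\widetilde b_2\ge\widetilde b_1^2$ and $\widetilde b_{j-1}\widetilde b_{j+1}\ge\widetilde b_j^2$ for $j\ge2$, together with the monotonicity of the relevant ratios of $\widetilde b_j$, and it is the careful verification of these that constitutes the real technical content.
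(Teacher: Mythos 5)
Your proposal is correct and follows essentially the same route as the paper: the authors likewise obtain the one-step inequality \eqref{eq:pn1E} from \eqref{eq:L1SAV2}--\eqref{eq:L1SAV3}, substitute $\mu^{n+\frac12}=\gamma^{-1}\mathcal G^{-1}\bar\partial^\alpha_{n+\frac12}\phi$ via \eqref{eq:L1SAV1}, and then conclude by combining with the quadratic-form inequalities \eqref{eq:L1halfbound} (unweighted, telescoping sum for boundedness) and \eqref{eq:L1halfprop} (sum weighted by $\tilde b_{n-k}>0$ for the fractional law). You also correctly identify that the genuine technical content lives in the preceding lemma rather than in this bookkeeping step.
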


\vskip .25cm

We point out that
in the case of $\tilde b_0 \geq \tilde b_1$, i.e., $\alpha\geq \ln_3\frac 3 2 \approx 0.3691$,  the fractional energy law \eqref{eq:fraclaw1} can lead directly to the energy boundedness \eqref{eq:enebound1} by induction.

\section{Nonuniform time steps}\label{sect4}~
We will demonstrate in this section that Lemma \ref{lem:cholesky} can be extended to handle nonuniform time steps. To this end, consider the general nonuniform time mesh in the form
\begin{equation}\label{eq:tau_j}
\tau_j = t_j -t_{j-1},\quad 1\leq  j \leq N,
\end{equation}
where $\tau_j$ denotes the $j$th time step.
For the nonuniform time mesh \eqref{eq:tau_j}, the corresponding L1 approximation to $\partial_t^\alpha \phi$ at $t_n$ and $\frac 1 2 (t_{n+1}+t_n)$ becomes
\begin{equation}\label{eq:L1nonuni}
D_{n}^\alpha  \phi  = \sum_{j=1}^n  d_{n,j} D_{j} \phi, \qquad
D_{n+\frac 1 2}^\alpha  \phi = \sum_{j=0}^n \bar d_{n,j} D_{j+1}  \phi,
\end{equation}
respectively, where
\begin{subequations}
	\begin{align}
& d_{n,j}= \frac{(t_n-t_{j-1})^{1-\alpha} - (t_n-t_j)^{1-\alpha}}{ \Gamma(2-\alpha) \tau_j}, \\
& \bar d_{n,j}  = \frac{\left( t_{n+1}+t_n-2t_{j}\right)^{1-\alpha} - \left( t_{n+1}+t_n-2t_{j+1}\right)^{1-\alpha} }{ \Gamma(2-\alpha)2^{1-\alpha} \tau_{j+1}},\\
&\bar d_{n,n} = \frac{(t_{n+1}-t_n)^{1-\alpha}}{ \Gamma(2-\alpha) 2^{1-\alpha} \tau_{n+1}},
\end{align}
\end{subequations}
and $D_j \phi \coloneqq \phi^{j} - \phi^{j-1}$.
Note that the notations $D_{n}^\alpha  \phi , D_{n+\frac 1 2}^\alpha  \phi, D_j \phi, d_{n,j}, \bar d_{n,j}$ correspond respectively to the previous notations $\overline \partial_{n}^\alpha  \phi, \bar\partial_{n+\frac 1 2}^\alpha  \phi, \overline \partial_j \phi, b_{n-j}, \tilde b_{n-j}$, but now for nonuniform time meshes.

\vskip .25cm

\begin{lemma}\label{lem:L1nu}
For any function $u$ defined on $[0,T]\times  \Omega$, the following properties hold:
\begin{eqnarray}
&& \label{eq:L1nubound}
\sum_{k=1}^{n} \left<D_{k}^\alpha u, D_k u\right> \geq 0 \quad \forall n\geq 1,
\\
&& \label{eq:L1halfnubound}
\sum_{k=0}^{n} \left<D_{k+\frac 1 2}^\alpha u, D_{k+1} u \right> \geq 0 \quad \forall n\geq 0,
\end{eqnarray}
where $D_{k}^\alpha$ and $D_{k+\frac 1 2}^\alpha$ are given by \eqref{eq:L1nonuni} for nonuniform time steps.
\end{lemma}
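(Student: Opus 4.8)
The plan is to reduce each inequality to the positive definiteness of a symmetric matrix and then invoke Lemma \ref{lem:cholesky}, exactly as in the proofs of Lemmas \ref{lem:L1prop1} and \ref{lem:L1prop2}. Writing $v_k = D_k u$ for \eqref{eq:L1nubound} (resp. $v_k = D_{k+1} u$ for \eqref{eq:L1halfnubound}), the left-hand side is the quadratic form $\sum_{k,j} M_{kj}\langle v_j,v_k\rangle = \frac12 \sum_{k,j}(M+M^{\rm T})_{kj}\langle v_j,v_k\rangle$, where $M$ is the lower-triangular coefficient matrix with $M_{kj}=d_{k,j}$ for $j\le k$ (resp. $M_{kj}=\bar d_{k,j}$), and $\langle\cdot,\cdot\rangle$ is the (symmetric) $\ell_2$ inner product on $\Omega$. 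It therefore suffices to prove that $M+M^{\rm T}$, a symmetric matrix with strictly positive entries, is positive definite. Unlike in Lemmas \ref{lem:L1prop1}--\ref{lem:L1prop2}, no anti-diagonal conjugation is needed here, since there is no diagonal weight; we will simply verify conditions (P1)--(P3) of Lemma \ref{lem:cholesky}, after relabelling the index range to the form used in that lemma.

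The key computational device is the mean-value representation
\be
d_{k,j} = \frac{1}{\Gamma(1-\alpha)\,\tau_j}\int_{t_{j-1}}^{t_j} (t_k-s)^{-\alpha}\,{\rm d}s,
\qquad
\bar d_{k,j} = \frac{1}{\Gamma(1-\alpha)\,\tau_{j+1}}\int_{t_j}^{t_{j+1}} (t_{k+\frac12}-s)^{-\alpha}\,{\rm d}s \quad (j<k),
\ee
with $t_{k+\frac12}\coloneqq\frac12(t_{k+1}+t_k)$, together with the special diagonal value $\bar d_{k,k} = \frac{1}{\Gamma(1-\alpha)\tau_{k+1}}\int_{t_k}^{t_{k+\frac12}}(t_{k+\frac12}-s)^{-\alpha}\,{\rm d}s$; these follow from $x^{1-\alpha}-y^{1-\alpha}=(1-\alpha)\int_y^x r^{-\alpha}\,{\rm d}r$. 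Thus, off the diagonal, each coefficient is the average over a time subinterval of the positive, strictly increasing kernel $\varphi_k(s)=(t_k-s)^{-\alpha}/\Gamma(1-\alpha)$ (resp. $(t_{k+\frac12}-s)^{-\alpha}/\Gamma(1-\alpha)$). From this we read off three monotonicity facts: (i) $\varphi_k(s)$ is decreasing in $k$ for fixed $s$; (ii) $\varphi_k(s)$ is increasing in $s$, so its average over an interval translated to the right increases and is sandwiched between the endpoint values; (iii) the difference $(t_{k-1}-s)^{-\alpha}-(t_k-s)^{-\alpha}$ is increasing in $s$, because its $s$-derivative $\alpha[(t_{k-1}-s)^{-\alpha-1}-(t_k-s)^{-\alpha-1}]$ is positive. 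Fact (i) gives the column-monotonicity (P1) for off-diagonal entries, (ii) gives the row-monotonicity (P2), and (iii) gives the cross-difference condition (P3) — the nonuniform counterparts of the inequalities $f'\ge0$, $g'\ge0$, $h'\ge0$ established for the uniform mesh.

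What remains is the behaviour at and immediately next to the diagonal, where the entry $2d_{k,k}$ (resp. $2\bar d_{k,k}$) carries an extra factor $2$ and, in the half-point case, a half time-step. For \eqref{eq:L1nubound} this is routine: $d_{k,k-1}<d_{k,k}\le 2d_{k,k}$ and $d_{k,k-1}\le d_{k-1,k-1}$ dispose of (P1)--(P2) at the diagonal, and the diagonal instance of (P3) follows by chaining the bulk inequality $d_{k-1,k-2}-d_{k,k-2}\le d_{k-1,k-1}-d_{k,k-1}$ with $d_{k-1,k-1}\ge0$. For \eqref{eq:L1halfnubound} one needs in addition a handful of elementary inequalities for $x\mapsto x^{1-\alpha}$ — the nonuniform analogues of $2\tilde b_0>\tilde b_1$, $2\tilde b_0+\tilde b_2>2\tilde b_1$ and $2\tilde b_0\tilde b_2\ge\tilde b_1^2$ used in the $(2-\alpha)$-order lemma — above all the subadditivity $(a+b)^{1-\alpha}\le a^{1-\alpha}+b^{1-\alpha}$ and the concavity of $x^{1-\alpha}$, applied to $a=\tau_k$ and $b=\tau_{k+1}/2$. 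Once (P1)--(P3) are checked, Lemma \ref{lem:cholesky} yields the positive definiteness of $M+M^{\rm T}$, hence \eqref{eq:L1nubound} and \eqref{eq:L1halfnubound}.

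I expect the genuinely delicate part to be precisely the near-diagonal verification for the half-point operator $D_{k+\frac12}^\alpha$: there the clean "average of an increasing kernel over a shifting subinterval" picture breaks down, because $\bar d_{k,k}$ involves only the half-interval $[t_k,t_{k+\frac12}]$, so (P1)--(P3) near the diagonal must be reduced to concavity/subadditivity estimates on the step sizes $\tau_k,\tau_{k+1}$; should a fully mesh-free argument prove awkward there, the same conclusion holds under a mild bound on the local step ratio $\tau_{k+1}/\tau_k$, which is harmless for the applications of Section \ref{sect4}. By contrast, \eqref{eq:L1nubound} and the off-diagonal part of \eqref{eq:L1halfnubound} follow transparently from facts (i)--(iii) above.
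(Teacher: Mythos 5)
Your overall strategy is exactly the paper's: reduce each inequality to the positive definiteness of the symmetrized coefficient matrix, represent the off-diagonal entries as averages of the kernel $(t_k-s)^{-\alpha}$ (resp.\ $(t_{k+\frac12}-s)^{-\alpha}$) over the mesh subintervals, and read off (P1)--(P3) of Lemma \ref{lem:cholesky} from the three monotonicity facts about that kernel; this is precisely how the paper verifies \eqref{eq:L1nubound} (cf.\ \eqref{eq:Sij_int}), and your near-diagonal chaining argument for that case is correct and, if anything, more explicit than the paper's.

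The gap is where you yourself suspect it: the near-diagonal (P3) condition for the half-point operator, i.e.\ the inequality $\widetilde{\mathbf S}_{j,j}-\widetilde{\mathbf S}_{j+1,j}\geq \widetilde{\mathbf S}_{j,j-1}-\widetilde{\mathbf S}_{j+1,j-1}$ of \eqref{ineq:tildeS}. You do not prove it; you only assert that it should follow from subadditivity and concavity of $x^{1-\alpha}$ ``applied to $a=\tau_k$ and $b=\tau_{k+1}/2$,'' and then offer to retreat to a bound on the local step ratio. That retreat is not harmless: Lemma \ref{lem:L1nu} and Theorem \ref{thm-nu} are stated for \emph{arbitrary} nonuniform meshes, and the graded meshes of Section \ref{sect5} are used precisely because one wants freedom in the step ratios near $t=0$. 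Moreover, the inequality genuinely involves three consecutive steps $\tau_j,\tau_{j+1},\tau_{j+2}$ (see \eqref{ineq:ab}), not just the pair $(\tau_k,\tau_{k+1}/2)$, so plain subadditivity/concavity of $x^{1-\alpha}$ is not enough as stated. The paper closes this by introducing $Q(a,b)=2a^{1-\alpha}-(b+2a)^{1-\alpha}+b^{1-\alpha}-a\left[(a+2)^{1-\alpha}-a^{1-\alpha}-(b+2a+2)^{1-\alpha}+(b+2a)^{1-\alpha}\right]$ with $a=\tau_{j+1}/\tau_j$, $b=\tau_{j+2}/\tau_j$, proving $\partial_b Q\geq 0$ via Jensen's inequality for the convex map $x\mapsto x^{-\alpha}$ (with weights $\frac{1}{a+1}$ and $\frac{a}{a+1}$) and $Q(a,0)\geq 0$ via an auxiliary function $p$ with $p(0)=p'(0)=0$ and $p''\geq 0$. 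Until you supply an argument of this kind, \eqref{eq:L1halfnubound} is not established in the generality claimed.
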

\begin{proof}
To derive \eqref{eq:L1nubound}, we need to prove that
\begin{equation}
\mathbf D =
\left[\begin{array}{ccccc} d_{1,1} &  &  &  &  \\ d_{2,1} & d_{2,2} &  &  &  \\\vdots & \vdots & \ddots &  &  \\ d_{n-1,1} & d_{n-1,2} & \cdots & d_{n-1,n-1} &  \\ d_{n,1} & d_{n,2} & \cdots & d_{n,n-1} & d_{n,n} \end{array}\right]
\end{equation}
is positive definite.
Similarly, to derive \eqref{eq:L1halfnubound} we need to show that
\begin{equation}
\widetilde{\mathbf D} =
\left[\begin{array}{ccccc} \bar d_{0,0} &  &  &  &  \\ \bar d_{1,0} & \bar d_{1,1} &  &  &  \\\vdots & \vdots & \ddots &  &  \\ \bar d_{n-1,0} & \bar d_{n-1,1} & \cdots & \bar d_{n-1,n-1} &  \\ \bar d_{n,0} & \bar d_{n,1} & \cdots & \bar d_{n,n-1} & \bar d_{n,n} \end{array}\right]
\end{equation}
is positive definite.
Without much difficulty, one can verify that $\mathbf D + \mathbf D^{\rm T}$ and $\widetilde{\mathbf D}+ \widetilde{\mathbf D}^{\rm T}$ both satisfy the three conditions in Lemma \ref{lem:cholesky} and are therefore positive definite.
Here, we show details on the positive definiteness of $\mathbf S$:
\begin{eqnarray}
\mathbf S_{ij} &=& \Gamma(2-\alpha)\left(\mathbf D + \mathbf D^{\rm T}\right)_{ij} \nonumber \\
&=& \left\{
  \begin{array}{r@{}l}
	\begin{aligned}
	& 2 \tau_i^{-\alpha} && \mbox{if } i = j,\\
	& \tau_j^{-1} \left[(t_i-t_{j-1})^{1-\alpha} - (t_i-t_j)^{1-\alpha}\right]  && \mbox{if } i> j,\\
	& \mathbf S_{ji}  && \mbox{if } i< j.
	\end{aligned}
  \end{array}
  \right.
\end{eqnarray}
We focus on the lower triangular part of $\mathbf S$, i.e., $i\geq j$.
Firstly, it is easy to see that $\mathbf S_{ij}$ decreases w.r.t. $i$, i.e., the property (P1) in Lemma \ref{lem:cholesky}.
Secondly, we have
\begin{eqnarray}\label{eq:Sij_int}
\mathbf S_{ij}
& =& \tau_j^{-1} \left[(t_i-t_{j-1})^{1-\alpha} - (t_i-t_j)^{1-\alpha}\right] \nonumber \\
& =& \frac 1 {(1-\alpha)(t_j-t_{j-1})} \int_{t_{j-1}}^{t_j} (t_i-s)^{-\alpha}\,{\rm d}s \nonumber \\
& =& \frac{1}{1-\alpha}(t_i-\xi_j)^{-\alpha}, \qquad \xi_j \in (t_{j-1},t_j),
\end{eqnarray}
implying that $\mathbf S_{ij}$ increases w.r.t. $j$, i.e., the property (P2) in Lemma \ref{lem:cholesky}.
Thirdly, from \eqref{eq:Sij_int}, we have
\begin{eqnarray}
\mathbf S_{i-1,j} -\mathbf S_{i,j}
& =& \frac 1 {(1-\alpha)(t_j-t_{j-1})} \int_{t_{j-1}}^{t_j} \left[(t_{i-1}-s)^{-\alpha} - (t_i-s)^{-\alpha}\right]\,{\rm d}s \nonumber \\
& =& \frac{1}{1-\alpha}\left[(t_{i-1}-\eta_j)^{-\alpha} - (t_i-\eta_j)^{-\alpha}\right], \quad \eta_j \in (t_{j-1},t_j),
\end{eqnarray}
implying that $\mathbf S_{i-1,j}-\mathbf S_{i,j}$ increases w.r.t. $j$, i.e., the property (P3) in Lemma \ref{lem:cholesky} holds.
Therefore, $\mathbf S$ is positive definite and consequently, $\mathbf D$ is positive definite.

Similar proof can be done for $\widetilde{\mathbf D}$.
We shall verify that  $\widetilde{\mathbf S}= \Gamma(2-\alpha)(\widetilde{\mathbf D} + \widetilde{\mathbf D}^{\rm T})$ satisfies the three conditions in Lemma \ref{lem:cholesky}.
The proof is almost the same as before, except the verification of
\begin{equation}\label{ineq:tildeS}
\widetilde{\mathbf S}_{j,j} -\widetilde{\mathbf S}_{j+1,j} \geq \widetilde{\mathbf S}_{j,j-1} -\widetilde{\mathbf S}_{j+1,j-1} \qquad \forall j\geq 2.
\end{equation}
It can be verified that (\ref{ineq:tildeS}) is equivalent to
\begin{align}
	 &2\tau_{j+1}^{-\alpha} - \tau_{j+1}^{-1}\left[(\tau_{j+2}+2\tau_{j+1})^{1-\alpha}-\tau_{j+2}^{1-\alpha}\right] \label{ineq:ab} \\
&\geq \tau_{j}^{-1}\left[(\tau_{j+1}+2\tau_{j})^{1-\alpha}-\tau_{j+1}^{1-\alpha}\right]
- \tau_j^{-1} \left[(\tau_{j+2}+2\tau_{j+1}+2\tau_j)^{1-\alpha} - (\tau_{j+2}+2\tau_{j+1})^{1-\alpha}\right]. \nonumber
\end{align}
Let
\begin{eqnarray*}
Q(a,b) & =&  2 a^{1-\alpha}  -(b+2a)^{1-\alpha} + b^{1-\alpha} \label{de1} \\
&& \quad  - a\left[(a+2)^{1-\alpha}-a^{1-\alpha}-(b+2a+2)^{1-\alpha}+(b+2a)^{1-\alpha}\right]. \nonumber
\end{eqnarray*}
For any $a,b>0$, straightforward computation gives
\[
\partial_b Q(a,b) = (1-\alpha) (a+1) \left[\frac{1}{a+1}b^{-\alpha} + \frac{a}{a+1}(b+2a+2)^{-\alpha}-(b+2a)^{-\alpha}\right]\geq 0,
\]
where the Jensen's inequality is used. Further, when $b=0$, we have $Q(a,0) = a^{2-\alpha} p(1/a)$, where
\[
p(x) = (1+ 2x) - (1+2x)^{1-\alpha} +2^{1-\alpha} (1+x)^{1-\alpha}-2^{1-\alpha}(1+x).
\]
It is easy to find that $p(0) = 0$, $p'(0) = 0$ and $p''(x) \geq 0$, so that $p(x)\geq 0$.
Since $Q(a,0)\geq 0$ and $\partial_b Q(a,b)\geq 0$, we have $Q(a,b)\geq 0$ for any $a,b>0$. In particular, we have
\[
 \tau_{j+1} ^{-1} \tau_j^{1- \alpha} Q(  \tau_{j+1}\tau_{j}^{-1}, \tau_{j+2}\tau_{j}^{-1}) \ge 0.
 \]
It can be verified that the above inequality is equivalent to \eqref{ineq:ab}. This completes the proof of the lemma.
\end{proof}

\vskip .3cm

With nonuniform time mesh, we rewrite the first-order stabilized L1 scheme \eqref{eq:L1STA} as
\begin{equation}\label{sch:L1nu1}
D_{n+1}^\alpha \phi = \gamma \,\mathcal G \left(\mathcal L \phi^{n+1} +  \delta_\phi E_{1}(\phi^n) + \widetilde{\mathcal L} \left(\phi^{n+1}-\phi^n\right) \right),
\end{equation}
and the ($2-\alpha$)-order L1-SAV scheme \eqref{eq:L1SAV} as
\begin{subequations}\label{sch:L1nu2}
\begin{align}
& D_{n+\frac 1 2} ^\alpha \phi = \gamma \mathcal G \mu^{n+\frac 1 2}, \label{sch:L1nu21} \\
& \mu^{n+\frac 1 2} = \mathcal L \phi^{n+\frac 1 2} + \frac{r^{n+\frac 1 2}}{\sqrt{E_1(\bar \phi^{n+\frac 1 2})+C_0}} \delta_\phi E_1(\bar \phi^{n+\frac 1 2}), \label{sch:L1nu22} \\
& r^{n+1}-r^{n}  = \frac{1}{2\sqrt{E_1(\bar \phi^{n+\frac 1 2})+C_0}} \left<\delta_\phi E_1(\bar \phi^{n+\frac 1 2}), \phi^{n+1}-\phi^n\right>. \label{sch:L1nu23}
\end{align}
\end{subequations}
where $\mathcal L$, $\widetilde{\mathcal L}$, $E_1$, $\overline\phi^{n+\frac 1 2}$, and $C_0$ are the same as before.

Using Lemma \ref{lem:L1nu} we can derive the energy bound preserving property of the L1 schemes  \eqref{sch:L1nu1} and \eqref{sch:L1nu2} with nonuniform time meshes. The details will be omitted here.

\vskip .25cm

\begin{theorem}[Energy boundedness]\label{thm-nu}
The energy boundedness holds for the L1 schemes \eqref{sch:L1nu1} and \eqref{sch:L1nu2} with arbitrary nonuniform time mesh:
\begin{equation}
E^n \leq E^0 \quad \forall n\geq 1,
\end{equation}
where $E^n$ denotes the classical discrete energy for \eqref{sch:L1nu1} and the modified SAV energy for \eqref{sch:L1nu2}.
\end{theorem}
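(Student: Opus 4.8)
The plan is to mimic, step by step, the argument used for the uniform-mesh case in Section \ref{sect2} (the first-order scheme) and Section \ref{sect3} (the $(2-\alpha)$-order SAV scheme), replacing the two structural inputs — the per-step energy inequality and the positive-definiteness lemma — by their nonuniform analogues. The positive-definiteness input is already available: it is exactly Lemma \ref{lem:L1nu}, which states that the lower-triangular matrices $\mathbf D$ and $\widetilde{\mathbf D}$ (whose rows are the nonuniform L1 weights $d_{n,j}$ and $\bar d_{n,j}$) are positive definite. The per-step energy inequality needs only a cosmetic change: Lemma \ref{lem:diffene} was proved by a maximum-principle/induction argument that does not use the uniformity of the mesh (the bound on $\|(\,S+1)\phi^n-(\phi^n)^3\|_\infty$ and the operator norm estimate are purely algebraic), so it carries over verbatim to \eqref{sch:L1nu1}, giving
\begin{equation*}
E^{n+1}-E^n \le \frac 1 \gamma \left< \mathcal G^{-1} D_{n+1}^\alpha \phi,\ \phi^{n+1}-\phi^n\right>,\qquad 0\le n\le N-1,
\end{equation*}
and likewise the SAV energy-difference identity \eqref{eq:pn1E} holds for \eqref{sch:L1nu2} with $\bar\partial_{n+\frac12}^\alpha$ replaced by $D_{n+\frac12}^\alpha$, since its derivation (take inner product of the $\mu$-equation with $D_{n+1}\phi$, multiply the $r$-equation by $2r^{n+\frac12}$, add) is mesh-independent.

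With these two ingredients, the argument is as follows. First I would substitute the per-step inequality into the telescoped sum: for the first-order scheme, introducing $\psi^k$ as in \eqref{eq:psik} (so that $\langle \mathcal G^{-1} D_k^\alpha\phi, D_k\phi\rangle = -\langle D_k^\alpha\psi, D_k\psi\rangle$), we get
\begin{equation*}
\sum_{k=1}^{n}\left(E^k-E^{k-1}\right) \le -\frac1\gamma\sum_{k=1}^{n}\left<D_k^\alpha\psi,\ D_k\psi\right>,
\end{equation*}
and the right-hand side is $\le 0$ by \eqref{eq:L1nubound} of Lemma \ref{lem:L1nu}, hence $E^n \le E^0$. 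Actually, to get the sharper inductive bound one keeps the weights: from the per-step inequality, $\langle D_{n+1}^\alpha\phi,\phi^{n+1}-\phi^n\rangle$ expands via \eqref{eq:L1nonuni} into a combination of $\langle D_j\phi, D_{n+1}\phi\rangle$, and summing $b_0$-normalised versions against the energy increments reproduces a nonuniform analogue of \eqref{ineq:rmk1}; an induction on $n$ (as in Corollary \ref{colloary}) then yields $E^n\le E^0$. For the SAV scheme the same template applies with \eqref{eq:pn1E}, the decomposition $\langle \mathcal G^{-1} D_{k+\frac12}^\alpha\phi, D_{k+1}\phi\rangle = -\langle D_{k+\frac12}^\alpha\psi, D_{k+1}\psi\rangle$, and \eqref{eq:L1halfnubound} in place of \eqref{eq:L1nubound}, giving $E^{n+1}\le E^0$ for the modified energy.

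The genuinely hard part of the whole development is already discharged in Lemma \ref{lem:L1nu} — namely verifying property (P3) of Lemma \ref{lem:cholesky} for $\widetilde{\mathbf S}$, which required the auxiliary function $Q(a,b)$ and the Jensen-inequality estimate $\partial_b Q\ge0$ together with $Q(a,0)\ge0$. Given that, the proof of Theorem \ref{thm-nu} itself is essentially bookkeeping: the only point needing a little care is to confirm that the matrix appearing after substituting the per-step inequality is precisely (a diagonal similarity of) $\mathbf D$ or $\widetilde{\mathbf D}$ — i.e. that the coefficient structure of $D_n^\alpha$ acting on the energy increments matches the rows of $\mathbf D$ — and that the $\psi^k$ substitution for the Cahn--Hilliard case ($\psi^k = \nabla(-\Delta)^{-1}D_k\phi$) still makes $\langle\mathcal G^{-1}D_k^\alpha\phi, D_k\phi\rangle$ a genuine quadratic form in the $D_j\psi$'s, which it does because $\mathcal G^{-1}=(-\Delta)^{-1}$ is self-adjoint and commutes with the discrete time operators. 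I would then simply remark, as the authors do, that the induction identical to Corollary \ref{colloary} closes the argument, and omit the routine details.
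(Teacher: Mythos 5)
Your proposal is correct and follows exactly the route the paper intends (the paper itself omits the details, saying only that the result follows from Lemma \ref{lem:L1nu}): the per-step inequalities of Lemma \ref{lem:diffene} and \eqref{eq:pn1E} carry over to the nonuniform mesh, telescoping the energy differences gives $E^n - E^0 \leq -\frac{1}{\gamma}\sum_{k}\left<D_k^\alpha \psi, D_k\psi\right>$ (resp.\ the half-step version), and \eqref{eq:L1nubound} and \eqref{eq:L1halfnubound} make the right-hand side nonpositive. The digression about reproducing a nonuniform analogue of \eqref{ineq:rmk1} by induction is unnecessary (and not supported by the lemmas proved for nonuniform meshes), but your direct telescoping argument is complete and suffices.
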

}

\vskip .25cm



\section{Numerical tests}
\label{sect5}
In this section, we test the proposed schemes for time-fractional phase-field equations to verify our energy stability results.
In particular, we test the first order stabilized L1 scheme \eqref{eq:L1STA} for the time-fractional AC and CH models, and then the ($2-\alpha$)-order L1-SAV scheme \eqref{eq:L1SAV} for the time-fractional MBE model with slope selection.
The implementations of these schemes are linearly implicit without using nolinear iterations.

\subsection{Time-fractional  Allen--Cahn model}
We first test the stabilized L1 scheme \eqref{eq:L1STA} for the time fractional AC model defined in a two-dimensional domain $\Omega = [0,L_{x}] \times [0,L_{y}]$ with periodic boundary conditions.
We take $L_x = L_y = 2,~ \varepsilon = 0.1$, and $\gamma = 1$ in \eqref{eq:phase_field}.
The stabilization constant $S$ in scheme \eqref{eq:L1STA} is set to be $S = 2$ and the time step is set to be $\Delta t = 0.01$.
The peuso-spectral method with $128 \times 128$ Fourier modes is used for space discretization.
The initial phase-field state is taken as
\begin{equation}
  \phi_0(x) = \tanh\left[ \frac{1}{2\varepsilon} \left(\frac {2r} 3 - \frac 1 4 - \frac{1+\cos(6\theta)}{16} \right) \right]
\end{equation}
with the polar coordinates
$r = \sqrt{x^2+y^2}$ and $\theta = \arctan ( y/ x)$.

Fig. \ref{fig:snapshots_AC} illustrates the solution $\phi$ to the time-fractional AC equation with different orders of derivative
$\alpha = 1, 0.8, 0.5, 0.3$.
On the left-hand side of Fig. \ref{fig:energy_AC}, one can see that the energy decreases with respect to time.
In the middle of Fig. \ref{fig:energy_AC}, it can be observed that the fractional derivative of energy is always nonpositive for different values of $\alpha$, which is in good agreement with the discrete fractional energy law in Theorem \ref{main-thm1}.
On the right-hand side of Fig. \ref{fig:energy_AC}, we plot the derivative of wighted energy with respect to time, which is found to be nonpositive as stated in Theorem \ref{main-thm2}.

\begin{figure}[!h]
\centering
\includegraphics[trim={0.5in 1in 2in 0.5in},clip,width=\textwidth]{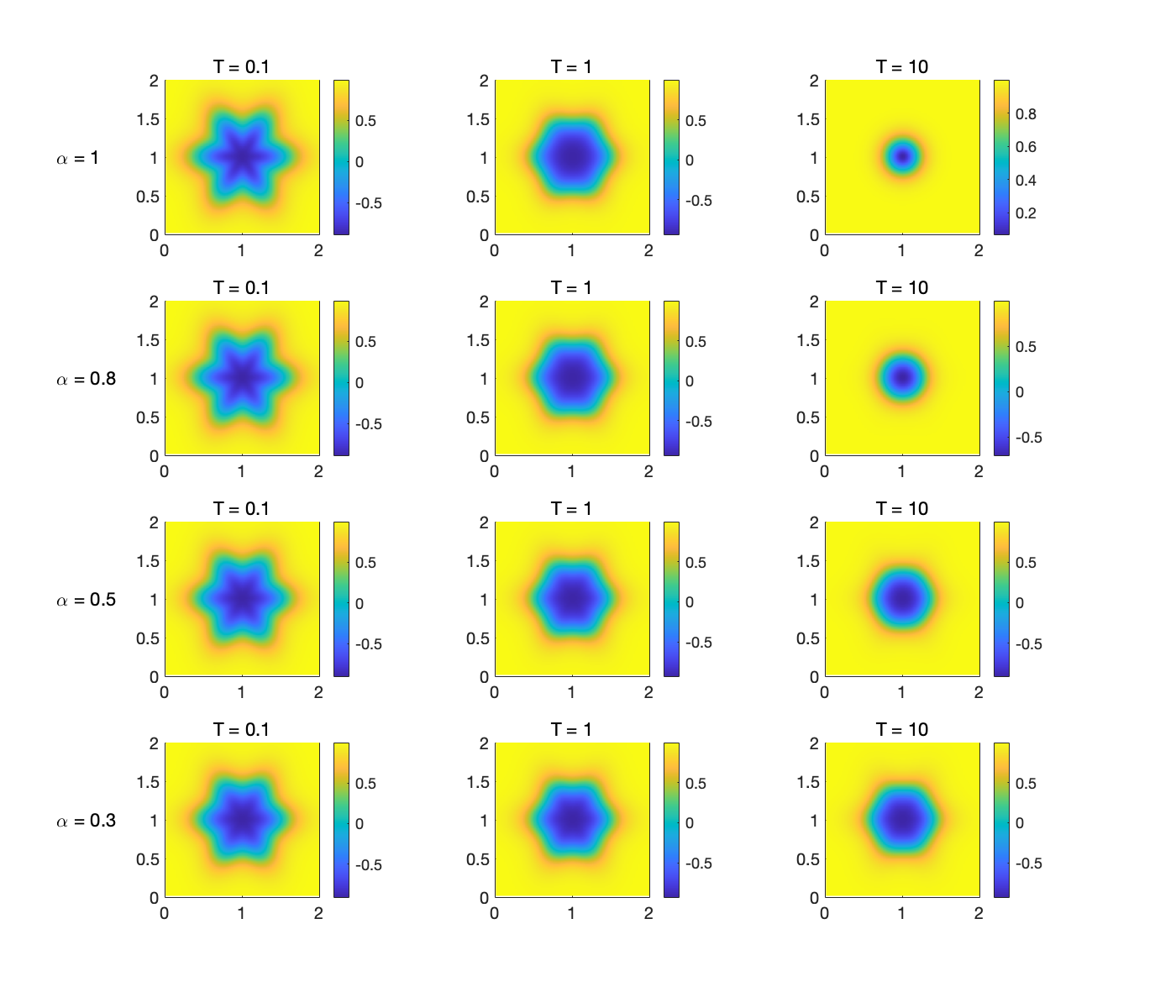}
\vspace{-0.3in}
\caption{Snapshots of the time-fractional Allen--Cahn solution with  $\alpha = 1, 0.8, 0.5, 0.3$.} \label{fig:snapshots_AC}
\end{figure}

\begin{figure}[!h]
\centering
\includegraphics[trim={1.6in 0.in 1.6in 0.in},clip,width=\textwidth]{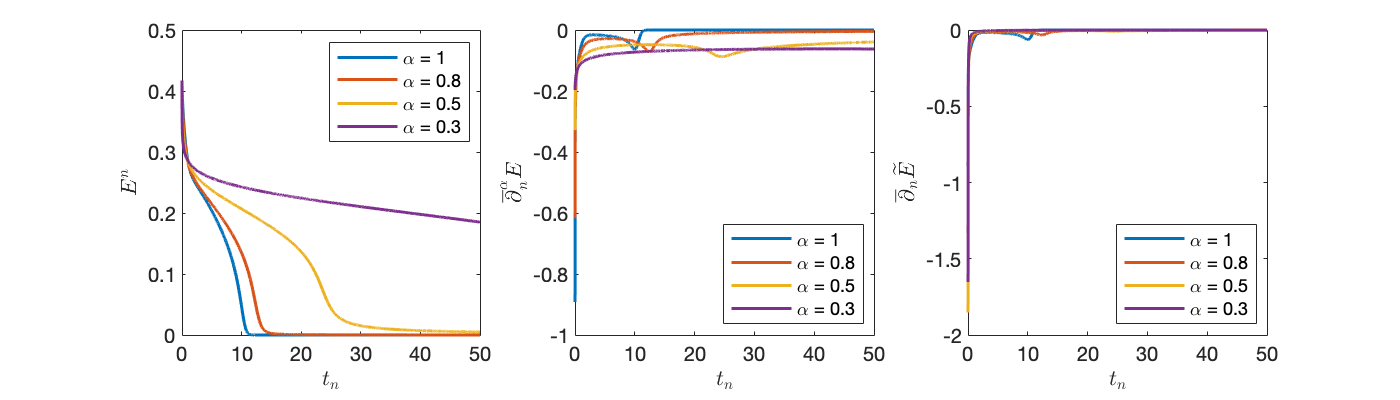}
\vspace{-0.3in}
\caption{Energy (left), time-fractional derivative of energy (middle), and time derivative of weighted energy (right) w.r.t. time,  for the time-fractional Allen--Cahn model with $\alpha = 1, 0.8, 0.5, 0.3$.} \label{fig:energy_AC}
\end{figure}

\subsection{Time-fractional Cahn--Hilliard model}
For the time-fractional CH model defined on $ [0,L_{x}] \times [0,L_{y}]$, we again solve the governing equation using the stabilization scheme \eqref{eq:L1STA}. The following parameters are used: $L_x = L_y = 2, \varepsilon = 0.1$, $\gamma = 0.1$, $L = 8$, $S = 4$, and $\Delta t = 0.01$. Still, $128 \times 128$ Fourier modes are used in the peuso-spectral method.
The initial phase-field state $\phi_0$ is taken as the uniformly distributed random field in $[-1,1]$.

Fig. \ref{fig:snapshots_CH} illustrates the phase-field function $\phi$ with $\alpha = 1, 0.8, 0.5, 0.3$.
We observed an interesting phenomena that the steady state with with $\alpha=0.5$ and $0.3$is very different with that of $\alpha=1$ and $0.8$.
In addition, it is observed from the left-hand side of Fig. \ref{fig:energy_CH} the energy decreases with respect to time although its theoretical justification is still unavailable.
Furthermore, in the middle of Fig. \ref{fig:energy_CH}, it can be observed that the fractional derivative of energy is always nonpositive as stated in Theorem \ref{main-thm1}.
On the right-hand side of Fig. \ref{fig:energy_CH}, the derivative of weighted energy is nonpositive as stated in Theorem \ref{main-thm2}.

\begin{figure}[!h]
\centering
\includegraphics[trim={0.5in 1in 2in 0.5in},clip,width=\textwidth]{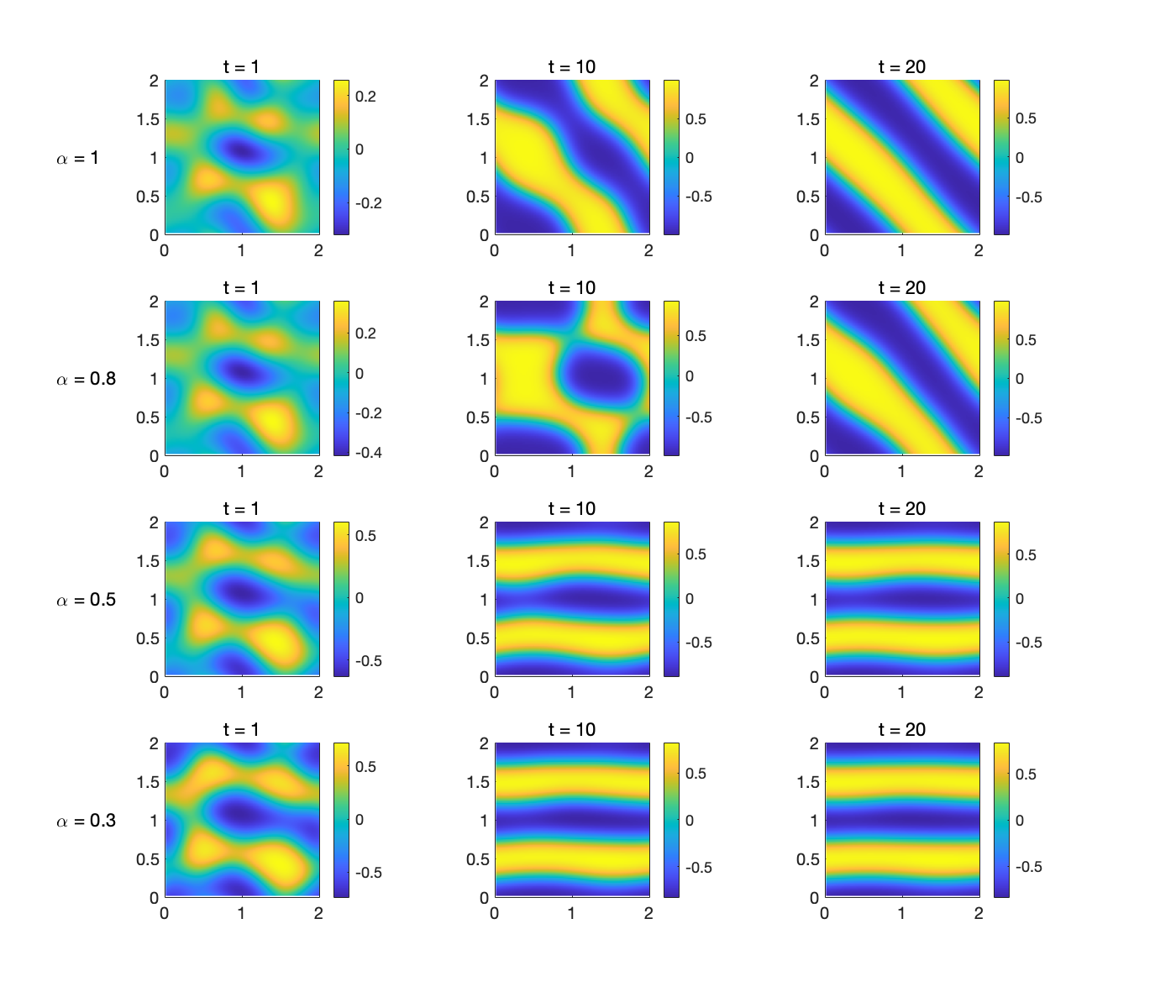}
\vspace{-0.3in}
\caption{Snapshots of the time-fractional Cahn--Hilliard solution with  $\alpha = 1, 0.8, 0.5, 0.3$.} \label{fig:snapshots_CH}
\end{figure}

\begin{figure}[!h]
\centering
\includegraphics[trim={1.6in 0.in 1.6in 0.in},clip,width=\textwidth]{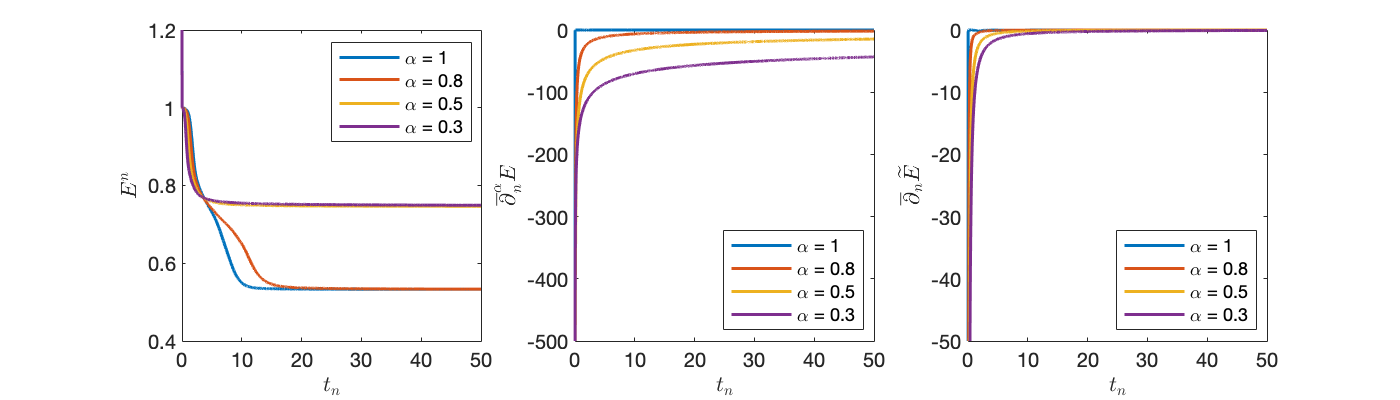}
\vspace{-0.3in}
\caption{Energy (left), time-fractional derivative of energy (middle), and time derivative of weighted energy (right) w.r.t. time $t$,  for the time-fractional Cahn--Hilliard equation with $\alpha = 1, 0.8, 0.5, 0.3$.} \label{fig:energy_CH}
\end{figure}

\subsection{Time-fractional MBE model with slope selection}
We test the ($2-\alpha$)-order L1-SAV schemes \eqref{eq:L1SAV} and \eqref{sch:L1nu2} for solving the time-fractional MBE equation \eqref{eq:mbe} with slope selection  defined in the domain $ [0,2\pi] \times [0,2\pi]$.
The following parameters are used: $\varepsilon^2 = 0.1$, $\gamma = 1$, and $C_0 = 1$.
For the peuso-spectral method, $128 \times 128$ Fourier modes are used.
The initial phase-field state $\phi_0$ is taken to be
\begin{equation}
\phi_0(x,y) = 0.1 \left[\sin(3x)\sin(2y)+\sin(5x)\sin(5y)\right],\quad (x,y)\in [0,2\pi]^2.
\end{equation}
First, we test the scheme \eqref{eq:L1SAV}  with uniform time step $\Delta t = 0.01$.
Fig. \ref{fig:snapshots_MBE} illustrates the phase solution $\phi$ to the time-fractional MBE equation with $\alpha = 1, 0.8, 0.5$ and $0.3$.
It can be observed that when $\alpha$ becomes smaller, the phase changes faster at the beginning but later changes quite slowly.
On the left-hand side of Fig. \ref{fig:energy_MBE}, the modified energy decreases w.r.t. time in the case of $\alpha = 1, 0.8$ and $0.5$.
However, the energy dissipation is violated at $t_n \approx 1.44$ in the case of $\alpha = 0.3$, as observed in \cite{hou2019variant}.
The violation occurs for small $\alpha$, which requires much smaller mesh sized to obtain satisfactory resolution.
Even in this case it is observed that the energy stability (boundedness) is preserved.
In the middle of Fig. \ref{fig:energy_MBE}, the fractional energy law in Theorem \ref{main-thm3} is verified.
On the right-hand side of the figure, it is found that the derivative of weighted (modified) energy is nonpositive.

\begin{figure}[!h]
\centering
\includegraphics[trim={0.5in 1in 2in 0.5in},clip,width=\textwidth]{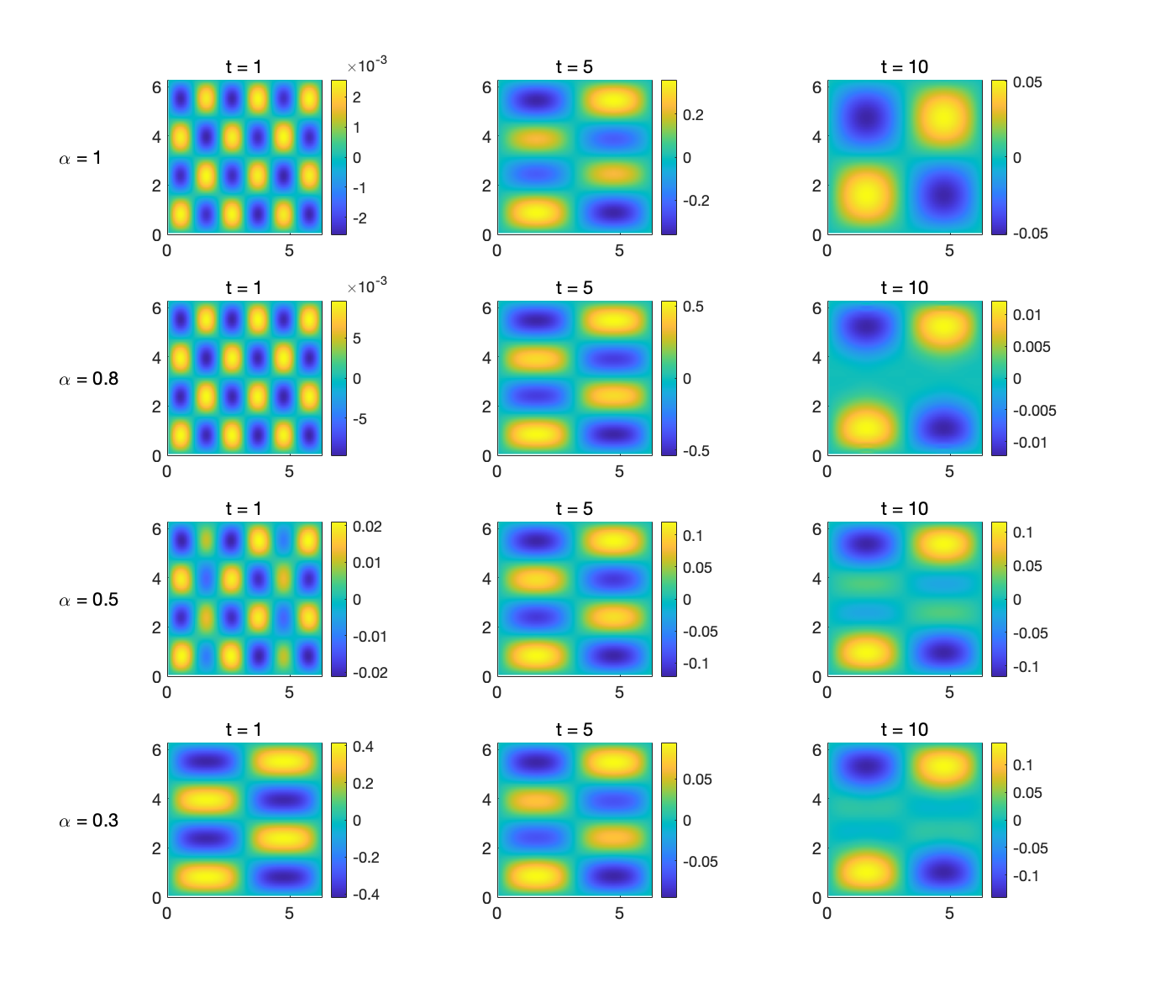}
\vspace{-0.3in}
\caption{Snapshots of the solution to the time-fractional MBE equation with slope selection with $\alpha = 1, 0.8, 0.5, 0.3$.} \label{fig:snapshots_MBE}
\end{figure}

\begin{figure}[!h]
\centering
\includegraphics[trim={.5in 0.in 1.6in 0.in},clip,width=\textwidth]{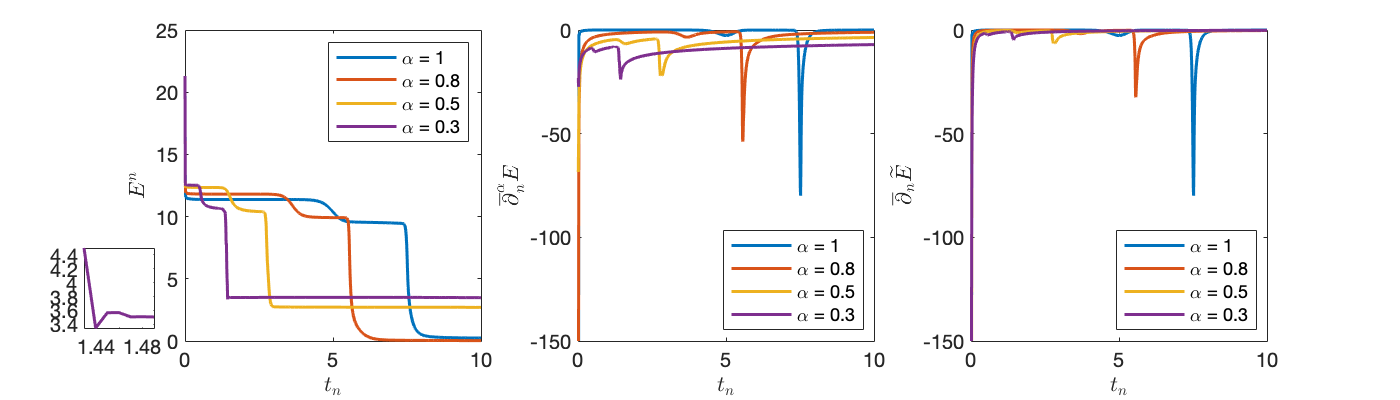}
\vspace{-0.3in}
\caption{Energy (left), time-fractional derivative of energy (middle), and time derivative of weighted energy (right) w.r.t. time $t$, for the time-fractional MBE equation with slope selection with $\alpha = 1, 0.8, 0.5, 0.3$. Here, in the middle and the right-hand side figures, the $y$-axis is cut for a better illustration.} \label{fig:energy_MBE}
\end{figure}

Next, we test the scheme \eqref{sch:L1nu2} with graded time mesh (see, e.g., \cite{tanggraded92}):
\begin{equation}
t_j = \left({j}/{N}\right)^{r} T, \quad j = 0,1,\ldots,N,
\end{equation}
where $r\geq 1$ is some constant and $N$ is the total number of time steps.
In our test, we set $r=1.2$ and $T=10$.
Fig. \ref{fig:energy_MBE_graded} illustrate the modified energy for $N = 100,~500$ and $1000$ respectively, which corresponds
roughly to $\Delta t \approx 0.1, 0.02, 0.01$ away from $t=0$.
It is observed that the computed energy is bounded above by the initial energy, as stated in Theorem \ref{thm-nu}.
One can find that in the case of $N = 100$ and $\alpha = 0.3$, the computed energy oscillates despite that the stability can be ensured.
We also mention that when $\alpha = 0.3$ and the computed energy at $T = 10$ varies  with different values of $N$.

\begin{figure}[!h]
\centering
\includegraphics[trim={1.6in 0.in 1.6in 0.in},clip,width=\textwidth]{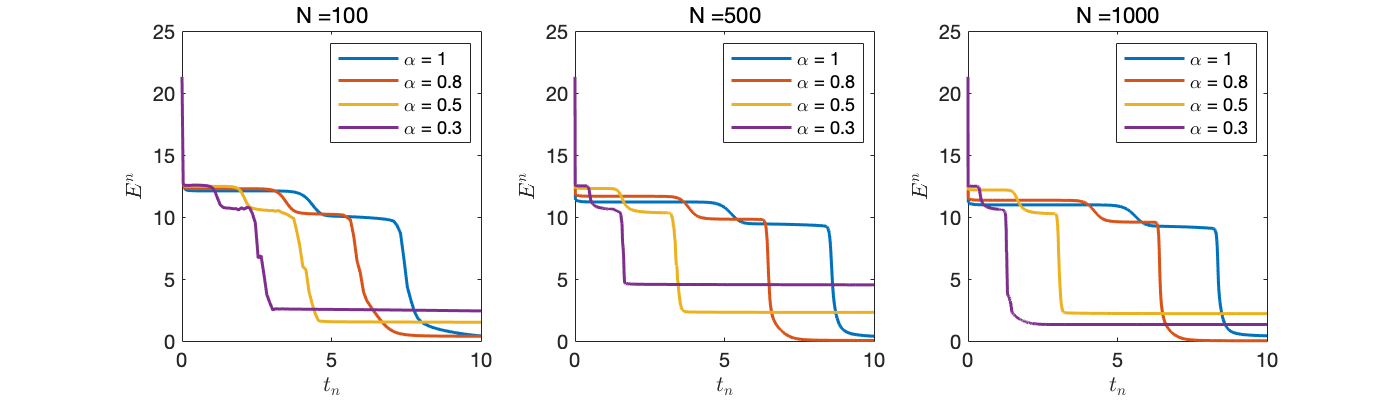}
\vspace{-0.3in}
\caption{Discrete energy of the L1-SAV scheme with graded time mesh for the time-fractional MBE equation with slope selection, where $N = 100, 500, 1000$ and  $\alpha = 1, 0.8, 0.5, 0.3$.} \label{fig:energy_MBE_graded}
\end{figure}

\section{Conclusion}
In this work, we applied a special Cholesky decomposition technique to analyze the energy stability of the first-order L1 approximation for time-fractional phase-field equations. From a numerical point view, the essential step to study the fractional PDE is to approximate the time-fractional derivative operator. We use a classical scheme called L1 approximation, which is naturally derived from the approximation of the fractional integral appropriately. In particular, we investigated the discrete versions of the fractional energy law of Du et al. \cite{du2020time}
and the weighted energy lay of \cite{quan2020define}, and obtained the dissipation of the fractional/weighted energy associated with numerical schemes. A higher order numerical scheme, i.e., a ($2-\alpha$)-order L1-SAV scheme, is also investigated, while
the relevant energy boundedness and the fractional energy law are established.
Moreover, we prove that the L1 schemes with nonuniform time steps still preserve the energy boundness.

We point out that the Cholesky decomposition technique is useful for the numerical schemes for time-fractional problems. It is expected that the framework developed in this work can also be employed to study second-order schemes for time-fractional phase-field equations. In fact, there have been extensive works of second-order
accurate energy stable numerical schemes for the Cahn-Hilliard
and MBE equation, using the convex splitting approach, in the standard temporal derivative case.
Both the Crank-Nicolson
and the modified BDF2 approaches, with the standard finite difference, mixed finite element and Fourier
pseudo-spectral spatial approximations have been reported,  see, e.g., \cite{diegel2016stability,lv2016error,shen2012second}.
It is certainly of some theoretical and numerical interests to extend these results to the time-fractional equations.
The corresponding analysis for the stability issues seems a challenging issue.

\section*{Acknowledgements}
This work is partially supported by the Special Project on High-Performance Computing of the National Key R\&D Program under No. 2016YFB0200604, NSFC Grant No. 11731006,  NSFC/Hong Kong RRC Grant 11961160718, and the fund of the Guangdong Provincial Key Laboratory of Computational Science and Material Design (No. 2019B030301001). The work of J. Yang is also supported by NSFC-11871264 and the Guangdong Basic and Applied Basic Research Foundation (2018A0303130123). The work of C. Quan is supported by NSFC-11901281, and the Guangdong Basic and Applied Basic Research Foundation (2020A1515010336).

\bibliography{bibfile_tang.bib}

\begin{thebibliography}{10}

\bibitem{Caff17}
Mark~Ryan Allen, Luis~A. Caffarelli, and Alexis Vasseur.
\newblock Porous medium flow with both a fractional potential pressure and
  fractional time derivative.
\newblock {\em Chinese Annals of Mathematics, Series B}, 38:45–82, 2017.

\bibitem{PMedia2013}
Jos{\'e}~M Carcione, Francisco~J Sanchez-Sesma, Francisco Luz{\'o}n, and Juan
  J~Perez Gavil{\'a}n.
\newblock Theory and simulation of time-fractional fluid diffusion in porous
  media.
\newblock {\em Journal of Physics A: Mathematical and Theoretical},
  46(34):345501, 2013.

\bibitem{DCL2005}
Diego del Castillo-Negrete, Benjamin Carreras, and V.~Lynch.
\newblock Nondiffusive transport in plasma turbulence: {A} fractional diffusion
  approach.
\newblock {\em Physical review letters}, 94:065003, 03 2005.

\bibitem{GroundW2017}
M.~Levent Kavvas, Tongbi Tu, Ali Ercan, and James Polsinelli.
\newblock Fractional governing equations of transient groundwater flow in
  confined aquifers with multi-fractional dimensions in fractional time.
\newblock {\em Earth Syst. Dynam.}, 8:921--929, 2017.

\bibitem{naber2004time}
Mark Naber.
\newblock Time fractional {S}chr{\"o}dinger equation.
\newblock {\em Journal of Mathematical Physics}, 45(8):3339--3352, 2004.

\bibitem{caputo2015}
M.~Caputo and M.~Fabrizio.
\newblock Damage and fatigue described by a fractional derivative model.
\newblock {\em J. Comput. Phys.}, 293:400–408, 2015.

\bibitem{du2020review}
Qiang Du, L.-L. Ju, X.~Li, and Z.-H. Qiao.
\newblock Maximum bound principles for a class of semilinear parabolic
  equations and exponential time differencing schemes.
\newblock {\em To allear in SIAM Review}, 2021.

\bibitem{hughes2011}
Hector Gomez and Thomas~JR Hughes.
\newblock Provably unconditionally stable, second-order time-accurate, mixed
  variational methods for phase-field models.
\newblock {\em Journal of Computational Physics}, 230(13):5310--5327, 2011.

\bibitem{shenxuyang19}
Jie Shen, Jie Xu, and Jiang Yang.
\newblock A new class of efficient and robust energy stable schemes for
  gradient flows.
\newblock {\em SIAM Review}, 61(3):474--506, 2019.

\bibitem{tang2020A}
Tao Tang.
\newblock Revisit of semi-implicit schemes for phase-field equations.
\newblock {\em Analysis in Theory and Applications}, 36:235--242, 2020.

\bibitem{du2020time}
Qiang Du, Jiang Yang, and Zhi Zhou.
\newblock Time-fractional {A}llen--{C}ahn equations: {A}nalysis and numerical
  methods.
\newblock {\em Journal of Scientific Computing}, 85(2):1--30, 2020.

\bibitem{lisalgado21}
W.~Li and A.~J. Salgado.
\newblock Time fractional gradient flows: Theory and numerics.
\newblock {\em arXiv preprint arXiv:2101.00541}, 2021.

\bibitem{WangH2018}
Huan Liu, Aijie Cheng, Hong Wang, and Jia Zhao.
\newblock {Time-fractional {A}llen--{C}ahn and {C}ahn--{H}illiard phase-field
  models and their numerical investigation}.
\newblock {\em Computers \& Mathematics with Applications}, 76:1876--1892, 10
  2018.

\bibitem{Luchko2017}
Yuri Luchko and Masahiro Yamamoto.
\newblock On the maximum principle for a time-fractional diffusion equation.
\newblock {\em Fractional Calculus and Applied Analysis}, 20, 10 2017.

\bibitem{xu2006stability}
Chuanju Xu and Tao Tang.
\newblock Stability analysis of large time-stepping methods for epitaxial
  growth models.
\newblock {\em SIAM Journal on Numerical Analysis}, 44(4):1759--1779, 2006.

\bibitem{TangYZ19}
Tao Tang, Haijun Yu, and Tao Zhou.
\newblock On energy dissipation theory and numerical stability for
  time-fractional phase field equations.
\newblock {\em SIAM J. Sci. Comput.}, 41:A3757--A3778, 2019.

\bibitem{quan2020define}
Chaoyu Quan, Tao Tang, and Jiang Yang.
\newblock How to define dissipation-preserving energy for time-fractional
  phase-field equations.
\newblock {\em CSIAM Transactions on Applied Mathematics}, 1(3):478--490, 2020.

\bibitem{giga2019discrete}
Yoshikazu Giga, Qing Liu, and Hiroyoshi Mitake.
\newblock On a discrete scheme for time fractional fully nonlinear evolution
  equations.
\newblock {\em Asymptotic Analysis}, (Preprint):1--12, 2019.

\bibitem{jinlizhou18}
B.~Jin, B.~Li, and Z.~Zhou.
\newblock Numerical analysis of nonlinear subdiffusion equations.
\newblock {\em SIAM J. Numer. Anal.}, 56:445--466, 2018.

\bibitem{huangstyne20}
C.~Huang and Martine Stynes.
\newblock Optimal ${H}^1$ spatial convergence of a fully discrete finite
  element method for the time-fractional {A}llen-{C}ahn equation.
\newblock {\em Preprint}, 2020.

\bibitem{chen2012linear}
Wenbin Chen, Sidafa Conde, Cheng Wang, Xiaoming Wang, and Steven~M Wise.
\newblock A linear energy stable scheme for a thin film model without slope
  selection.
\newblock {\em Journal of Scientific Computing}, 52(3):546--562, 2012.

\bibitem{eyre1998unconditionally}
David~J Eyre.
\newblock Unconditionally gradient stable time marching the {C}ahn--{H}illiard
  equation.
\newblock {\em MRS online proceedings library archive}, 529, 1998.

\bibitem{wang2010unconditionally}
Cheng Wang, Xiaoming Wang, and Steven~M Wise.
\newblock Unconditionally stable schemes for equations of thin film epitaxy.
\newblock {\em Discrete \& Continuous Dynamical Systems-A}, 28(1):405, 2010.

\bibitem{shen2010numerical}
Jie Shen and Xiaofeng Yang.
\newblock {Numerical approximations of {A}llen--{C}ahn and {C}ahn--{H}illiard
  equations}.
\newblock {\em Discrete \& Continuous Dynamical Systems-A}, 28(4):1669, 2010.

\bibitem{shen2018scalar}
Jie Shen, Jie Xu, and Jiang Yang.
\newblock {The scalar auxiliary variable ({S}{A}{V}) approach for gradient
  flows}.
\newblock {\em Journal of Computational Physics}, 353:407--416, 2018.

\bibitem{sun2006fully}
Zhi-Zhong Sun and Xiaonan Wu.
\newblock A fully discrete difference scheme for a diffusion-wave system.
\newblock {\em Applied Numerical Mathematics}, 56(2):193--209, 2006.

\bibitem{caffarelli95}
L.A. Caffarelli and N.E. Muler.
\newblock An ${L}^\infty$ bound for solutions of the {C}ahn-{H}illiard
  equation.
\newblock {\em Arch. Rational Mech. Anal.}, 133:129–144, 1995.

\bibitem{tang2016implicit}
Tao Tang and Jiang Yang.
\newblock Implicit-explicit scheme for the {A}llen-{C}ahn equation preserves
  the maximum principle.
\newblock {\em Journal of Computational Mathematics}, 34(5):471--481, 2016.

\bibitem{hou2019highly}
Dianming Hou, Hongyi Zhu, and Chuanju Xu.
\newblock Highly efficient and accurate schemes for time fractional
  {A}len-{C}ahn equation by using extended {SAV} approach.
\newblock {\em arXiv preprint arXiv:1910.09087}, 2019.

\bibitem{hou2019variant}
Dianming Hou, Mejdi Azaiez, and Chuanju Xu.
\newblock A variant of scalar auxiliary variable approaches for gradient flows.
\newblock {\em Journal of Computational Physics}, 395:307--332, 2019.

\bibitem{tanggraded92}
Tao Tang.
\newblock Superconvergence of numerical solutions to weakly singular volterra
  integro-differential equations.
\newblock {\em Numer. Math.}, 61:373--382, 1992.

\bibitem{diegel2016stability}
Amanda~E Diegel, Cheng Wang, and Steven~M Wise.
\newblock Stability and convergence of a second-order mixed finite element
  method for the {C}ahn--{H}illiard equation.
\newblock {\em IMA Journal of Numerical Analysis}, 36(4):1867--1897, 2016.

\bibitem{lv2016error}
Chunwan Lv and Chuanju Xu.
\newblock Error analysis of a high order method for time-fractional diffusion
  equations.
\newblock {\em SIAM Journal on Scientific Computing}, 38(5):A2699--A2724, 2016.

\bibitem{shen2012second}
Jie Shen, Cheng Wang, Xiaoming Wang, and Steven~M Wise.
\newblock Second-order convex splitting schemes for gradient flows with
  {E}hrlich--{S}chwoebel type energy: application to thin film epitaxy.
\newblock {\em SIAM Journal on Numerical Analysis}, 50(1):105--125, 2012.

\end{thebibliography}
\bibliographystyle{unsrt}

\end{document}